\newcommand{\X}{\mathcal{X}}
\newcommand{\bzt}{\boldsymbol{\zeta}}
\newcommand{\E}{\mathbb{E}}
\newcommand{\reals}{\mathbb{R}}
\newcommand{\argmin}{\mathop{\mathrm{arg\,min}{}}}
\newcommand{\argmax}{\mathop{\mathrm{arg\,max}{}}}
\newcommand{\ba}{{\mathbf a}}
\newcommand{\bx}{{\mathbf x}}
\newcommand{\bz}{{\mathbf z}}
\newcommand{\by}{{\mathbf y}}
\newcommand{\vertiii}[1]{{\left\vert\kern-0.25ex\left\vert\kern-0.25ex\left\vert #1
		\right\vert\kern-0.25ex\right\vert\kern-0.25ex\right\vert}}
\newtheorem{assumption}{Assumption}
\newtheorem{lemma}{Lemma}
\newtheorem{definition}{Definition}
\newtheorem{theorem}{Theorem}
\newtheorem{remark}{Remark}
\icmltitlerunning{Quadratically Regularized Subgradient Methods for Weakly Convex Optimization with Weakly Convex Constraints}
\begin{document}

\twocolumn[
\icmltitle{Quadratically Regularized Subgradient Methods for Weakly Convex Optimization with Weakly Convex Constraints}



\icmlsetsymbol{equal}{*}

\begin{icmlauthorlist}
\icmlauthor{Runchao Ma}{1}
\icmlauthor{Qihang Lin}{1}
\icmlauthor{Tianbao Yang}{2}
\end{icmlauthorlist}

\icmlaffiliation{1}{Department of Business Analytics, University of Iowa, Iowa
City, IA, USA}
\icmlaffiliation{2}{Department of Computer Science, University
of Iowa, Iowa City, IA, USA}

\icmlcorrespondingauthor{Qihang Lin}{qihang-lin@uiowa.edu}

\icmlkeywords{Machine Learning, ICML}

\vskip 0.3in
]



\printAffiliationsAndNotice{}  

\begin{abstract}
		Optimization models with non-convex constraints arise in many tasks in machine learning, e.g., learning with fairness constraints or Neyman-Pearson classification with non-convex loss. Although many efficient methods have been developed with theoretical convergence guarantees for non-convex unconstrained problems, it remains a challenge to design provably efficient algorithms for problems with non-convex functional constraints. This paper proposes a class of subgradient methods for constrained optimization where the objective function and the constraint functions are weakly convex and nonsmooth. Our methods solve a sequence of strongly convex subproblems, where a quadratic regularization term is added to both the objective function and each constraint function. Each subproblem can be solved by various algorithms for strongly convex optimization. Under a uniform Slater’s condition, we establish the computation complexities of our methods for finding a nearly stationary point.
	\end{abstract}
	
	\section{Introduction}
	\label{sec:intro}
	Continuous optimization models with nonlinear constraints have been widely used in many disciplines including machine learning, statistics, and data mining with many real-world applications. A general optimization problem with inequality constraints is formulated as
	\begin{equation}
	\begin{aligned}
	\label{eq:gco}
	f^*\equiv \min_{\bx\in\mathcal{X}}\{f(\bx)&\equiv f_0(\bx)\}   \\
	 \text{s.t.} \quad g(\bx)&\equiv\max_{i=1,\dots,m}f_i(\bx)\leq 0
	\end{aligned}
	\end{equation}
	Here, we assume that $\mathcal{X}\subset\mathbb{R}^d$ is a compact convex set that allows for a simple projection and $f_i$ for $i=0,\dots,m$ are weakly-convex (potentially non-smooth) functions. A solution $\bar \bx\in\mathcal{X}$ is \emph{$\varepsilon$-optimal} if $f(\bar \bx)-f^*\leq \varepsilon$ and \emph{$\varepsilon$-feasible} if $\bar\bx\in\X$ and $g(\bar\bx)\leq \varepsilon$. Many optimization models in machine learning contain nonlinear constraints. Examples include Neyman-Pearson classification~\citep{rigollet2011neyman} and learning with dataset constraints~\citep{goh2016satisfying} (e.g. fairness constraints and churn rate constraints). 
	
	Optimization problems with a convex objective function and convex constraints have been well studied in literature with many efficient algorithms and their theoretical complexity developed~\citep{bertsekas2014constrained,bertsekas1999nonlinear,nocedal2006numerical}. However, the parallel development for optimization with non-convex objective functions and non-convex constraints, especially for theoretically provable algorithms, remains limited, restricting the practices of statistical modeling and decision making in many disciplines. It is well-known that finding a global minimizer for a general non-convex function without any constraints has been intractable~\cite{sahni1974computationally}. The difficulty will increase when constraints appear and will increase even further when those constraints are non-convex. 
	
	Therefore, when designing an algorithm for~\eqref{eq:gco} with non-convex objective and constraint functions, the first question to be addressed is what kind of solutions can the algorithm guarantees and what complexity the algorithm has in order to find such solutions. In the recent studies on unconstrained or simply constrained\footnote{Here, being simply constrained means the feasible set is a simple set, e.g., a box or a ball, that allows for a closed-form for the projection mapping.} non-convex minimization~\citep{davis2018stochastic,Davis2018,davis2018complexity,davis2017proximally,drusvyatskiy2017proximal,Drusvyatskiy2018,DBLP:journals/siamjo/GhadimiL13a,DBLP:journals/mp/GhadimiL16,DBLP:journals/corr/abs/1805.05411,paquette2018catalyst,Reddi:2016:SVR:3045390.3045425,DBLP:conf/cdc/ReddiSPS16}, algorithms have been proposed to find a \emph{nearly stationary point}, which is a feasible solution close to another feasible solution where the subdifferential of the objective function almost contains zero. However, these methods and analysis cannot be applied to \eqref{eq:gco} as they require the exact projection to the feasible set which is hard to perform for \eqref{eq:gco} due to the functional constraints. To address this issue, in this work, we propose a class of first-order methods for~\eqref{eq:gco} where the objective function and the constraint functions are all weakly convex. Our methods solve a sequence of strongly convex subproblems where, different from the traditional proximal-point method, a quadratic regularization term is also added to each constraint function instead of just the objective function. Each subproblem can be solved by an algorithm for strongly convex optimization. Under a uniform Slater's condition, we establish the complexities of our methods for finding a nearly stationary point. We will discuss some applications of~\eqref{eq:gco} in machine learning next. 


	\subsection{Optimization Problems in Machine Learning with Nonlinear Constraints}\label{sec:apps}
	\textbf{Multi-class Neyman-Pearson Classification:} In multi-class classification, there exist $K$ classes of data, denoted by $\xi_k$ for $k=1,2,\dots,K$, each of which has its own distribution. To classify each data into one of the $K$ classes, one can rely on $K$ linear models $\bx_k$, $k=1,2,\dots,K$ and predict the class of a data point $\xi$ as 
	$\argmax_{k=1,2,\dots,K}\bx_k^\top \xi$. To achieve a high classification accuracy, we would like the value $\bx_k^\top \xi_k-\bx_l^\top \xi_k$ with $k\neq l$ to be positively large~\citep{weston1998multi,crammer2002learnability}, which can be achieved by minimizing the expected loss $\E\phi(\bx_k^\top \xi_k-\bx_l^\top \xi_k)$, where $\phi$ is a non-increasing  potentially non-convex loss function and $\E$ is the expectation taken over $\xi_k$. 
	When  training these $K$ linear models, one can prioritize minimizing the loss on class $1$ while control the losses on other classes by solving 
	\begin{align*}
		\label{eq:NPclassification_multi}
		&\min_{\|\bx_k\|_2\leq\lambda,k=1,\dots,K} \sum_{l\neq 1}\E[\phi(\bx_1^\top \xi_1-\bx_l^\top \xi_1)] \\
		&{s.t.}~\sum_{l\neq k}\E[\phi(\bx_k^\top \xi_k-\bx_l^\top \xi_k)]\leq r_k\quad k=2,3,\dots,K,
	\end{align*}
	where $r_k$ controls the loss for class $k$ and $\lambda$ is a regularization parameter. 
	When $\xi$ follows the empirical distribution over a finite dataset, the expectations above are essentially sample averages so that this problem becomes a deterministic optimization problem. 
	
	\textbf{Learning Data-Driven Constraints:} Problem~\eqref{eq:gco} also covers many machine learning models with data-driven constraints~\citep{goh2016satisfying}. The examples include the constraints that impose conditions on the coverage rates, churn rates, or fairness of a predictive model. More details can be found in \citep{goh2016satisfying}. Here, we focus on learning a classifier with parity-based fairness constraints \cite{goh2016satisfying,zafar2015fairness,zafar2017parity}. Suppose $(\ba,b)$ is a point from a distribution $\mathcal{D}$ where $b\in\{1,-1\}$ is the label. Let $\mathcal{D}_M$ and $\mathcal{D}_F$ be two different distributions of points (not necessarily labeled), e.g., $\mathcal{D}_M$ and $\mathcal{D}_F$ may represent the male and female groups. The training of a classifier with fairness constraints can be formulated as 
	\begin{align*}
		\min_{\|\bx\|_2\leq\lambda}&\E_{(\ba,b)\sim\mathcal{D}}[\phi(-b\ba^\top\bx)]\\
		{s.t.}~&\E_{\ba\sim\mathcal{D}_M}[\sigma(\ba^\top\bx)]+\beta\E_{\ba\sim\mathcal{D}_F}[\sigma(-\ba^\top\bx)]\leq r \\
		&\E_{\ba\sim\mathcal{D}_F}[\sigma(\ba^\top\bx)]+\beta\E_{\ba\sim\mathcal{D}_M}[\sigma(-\ba^\top\bx)]\leq r
	\end{align*}
	where $\phi$ is a non-increasing potentially non-convex loss function, $\sigma=\frac{\exp(z)}{1+\exp(z)}$, $\lambda$ is a regularization parameter, $\beta$ is a positive balance parameter and $r$ is a constraint parameter. The objective function is the training loss of $\bx$.
	The terms $\sigma(\ba^\top\bx)$ and $\sigma(-\ba^\top\bx)$ represent the predicted probabilities of $\ba$ being in the positive and the negative class, respectively. The left hand side of the first constraint will be large if the model $\bx$ is very ``unfair'' in the sense that it makes $\ba^\top\bx$ very negative for most of $\ba$ from $\mathcal{D}_M$ but very positive for most of $\ba$ from $\mathcal{D}_F$. The second constraint can be interpreted similarly.
	Choosing  appropriate $r$ forces the left hand sides of both constraints low so that the obtained model will be fair to both  $\mathcal{D}_M$ and $\mathcal{D}_F$.
	
	\subsection{Contributions} 
	\label{sec:contrib}
	We summarize our contributions as follows.
	\begin{itemize}
		\item We propose a class of algorithms (Algorithm~\ref{alg:iqrc}) for~\eqref{eq:gco} when all $f_i$ are weakly convex. This method approximately solves a strongly convex subproblem \eqref{eq:phit} in each main iteration with precision $O(\epsilon^2)$ using a suitable first-order method. We show that our method finds a nearly $\epsilon$-stationary point (Definition~\ref{def:stationary}) for \eqref{eq:gco} in $O(\frac{1}{\epsilon^2})$ main iterations. 
		\item When each $f_i$ is a deterministic function, we develop a new variant of the switching subgradient method~\cite{swtichgradientpolyak} to solve \eqref{eq:phit}. We show that the complexity of Algorithm~\ref{alg:iqrc} for finding a nearly $\epsilon$-stationary point is $O(\frac{1}{\epsilon^4})$.		
		\item When each $f_i$ is given as an expectation of a stochastic function, we directly use the stochastic subgradient method by \cite{yu2017online} to solve \eqref{eq:phit}. We show that the complexity of Algorithm~\ref{alg:iqrc} for finding a nearly $\epsilon$-stationary point is $\tilde O(\frac{1}{\epsilon^6})$.\footnote{In this paper, $\tilde O(\cdot)$ suppresses all logarithmic factors of $\epsilon$.}
	\end{itemize}
	
	\section{Related Work}
	There has been growing interest in first-order algorithms for non-convex minimization problems with no constraints or simple constraints in both stochastic and deterministic setting. Initially, the research in this direction mainly focus on the problem with a smooth objective function \cite{DBLP:journals/siamjo/GhadimiL13a,yangnonconvexmo,DBLP:journals/mp/GhadimiL16,DBLP:conf/cdc/ReddiSPS16,Reddi:2016:SVR:3045390.3045425,DBLP:journals/corr/abs/1805.05411,DBLP:conf/icml/Allen-Zhu17,DBLP:conf/icml/ZhuH16,lacoste2016convergence}. Recently, more studies have been developed on the algorithms and theories for non-convex minimization problems with non-smooth objective functions after assuming the objective function is weakly  convex~\citep{Davis2018,davis2018stochastic,Drusvyatskiy2018,davis2017proximally,chen18stagewise,zhang2018convergence}. These works tackle the non-smoothness of objective function by introducing the Moreau envelope of objective function and analyze the complexity of finding a nearly stationary point. However, these methods are not directly applicable to \eqref{eq:gco} because of the functional constraints. 
	
	The studies on convex optimization with functional constraints have a long history~\citep[and references therein]{bertsekas2014constrained,bertsekas1999nonlinear,nocedal2006numerical,ruszczynski2006nonlinear}. The recent development in the first-order methods for convex optimization with convex constraints include~\cite{mahdavi-2012-stochstic,zhang2013logt,chen2016optimal,yang2017richer,wei2018solving,xu2018primal,xu2017global,xu2017first,yu2017online,lin2018levelsiam,lin2018levelfinitesum,bayandina2018mirror,fercoq2019almost} for deterministic constraints and \cite{Lan2016,yu2017simple} for stochastic constraints. \cite{wei2018primal} propose a primal-dual Frank-Wolfe method for \eqref{eq:gco} with non-convex $f_0$ but linear $f_i$ for $i=1,2,\dots,m$. Different from these works, this paper study the problems where the objective function and the constaints are all non-convex. \cite{sahin2019inexact} propose an inexact augmented Lagrangian method for \eqref{eq:gco} with non-convex $f_0$ and nonlinear equality constraints. A complexity $\tilde O(\frac{1}{\epsilon^3})$ is claimed in Corollary 4.2 in \cite{sahin2019inexact}, but there is an error in its proof. The authors claimed the complexity of solving their subproblem is $\tilde O(\frac{\lambda_{\beta_k}^2 \rho^2}{\epsilon_{k+1}})$ but it should be $\tilde O(\frac{\lambda_{\beta_k}^2 \rho^2}{\epsilon_{k+1}^2})$. (See \cite{sahin2019inexact} for the definitions of $\lambda_{\beta_k}, \rho$, and $\epsilon_{k+1}$). After correcting this error, following the same proof they used gives a total complexity of  $\tilde O(\frac{1}{\epsilon^4})$. \cite{nguyen2018dc} study the problem \eqref{eq:gco} with non-convex $f_i$ for $i=0,\dots,m$, but only from the perspective of optimality conditions. Optimization algorithms
    and convergence analysis are not considered in their work.
	
	
	We realize a paper by Boob et al.~\cite{boob2019proximal} was posted online simultaneously as our paper. The main algorithms (Algorithm 1 and 2 in \cite{boob2019proximal}) they proposed are similar to our Algorithm~\ref{alg:iqrc} in the sense that a similar subproblem~\eqref{eq:phit} is solved in each main iteration. The main difference between our paper and~\cite{boob2019proximal} is the assumptions made to ensure the boundness of the dual variables of subproblem~\eqref{eq:phit}, which is critical to the convergence analysis. The authors of~\cite{boob2019proximal} establish convergence result under various constraint qualification conditions including, Mangasarian-Fromovitz constraint qualification (MFCQ), strong MFCQ, and strong feasibility while we only consider a uniform Slater's condition (Assumption~\ref{assume:stochastic}B). Strong feasibility condition is stronger than our uniform Slater's condition but, on the other hand, is easier to verify. The relative strength between (strong) MFCQ and the uniform Slater's condition is unknown. In addition, we focus on the cases where the objective and constraint functions are either all deterministic or all stochastic while \cite{boob2019proximal} considers an additional case where only the objective is stochastic. In the stochastic case, we require the stochastic gradients to be bounded (Assumption~\ref{assume:stochasticnew}) while ~\cite{boob2019proximal} assume the boundness of the second moment of the stochastic gradients. The complexities of our methods and theirs for finding an $\epsilon$-nearly stationary point are the same in the dependency on $\epsilon$ in the dominating terms. Their complexity is more general in the sense that it involves non-dominating terms that depend on the smoothness parameters of the smooth components of the functions which we do not consider. 
	


	\section{Preliminaries}\label{sec:pre}
	Let $\|\cdot\|$ be the $\ell_2$-norm. For $h:\reals^d\rightarrow \reals\cup\{+\infty\}$, the subdifferential of $h$ at $\bx$ is
	\begin{align*}
	\partial h(\bx)=
	&\big\{\bzt\in\mathbb{R}^d\big| 
	h(\bx')
	\geq h(\bx)+\bzt^\top(\bx'-\bx) \\
	&+o(\|\bx'-\bx\|), ~\bx'\rightarrow\bx
	\big\},
	\end{align*}
	where $\bzt\in\partial h(\bx)$ is a subgradient of $h$ at $\bx$. 
	We say  $h$ is \emph{$\mu$-strongly convex ($\mu\geq0$)} on $\X$ if 
	$$
	h(\bx)\geq h(\bx')+\bzt^\top(\bx-\bx')+\frac{\mu }{2}\|\bx-\bx'\|^2
	$$
	for any $(\bx,\bx')\in\X\times\X$ and any $\bzt\in\partial h(\bx')$. We say $h$ is \emph{$\rho$-weakly convex} ($\rho\geq0$) on $\X$ if
	$$
	h(\bx)\geq h(\bx')+\bzt^\top(\bx-\bx')-\frac{\rho}{2}\|\bx-\bx'\|_2^2
	$$
	for any $(\bx,\bx')\in\X\times\X$ and any $\bzt\in\partial h(\bx')$. 
	We denote the normal cone of $\X$ at $\bx$ by $\mathcal{N}_\X(\bx)$ and the distance from $\bx$ to a set $S$ by $\text{Dist}(\bx,S)=\min_{\by\in S}\|\bx-\by\|$. 
	
	The following assumptions about \eqref{eq:gco} are made throughout the paper:
	\begin{assumption}
		\label{assume:stochastic}
		The following statements hold:
		\begin{itemize}
			\item[A.] $f_i(\bx)$ is closed and $\rho$-weakly convex with $\partial f_i(\bx)\neq \emptyset$ on any $\bx\in\mathcal{X}$ for $i=0,1,\dots,m$.
			\item[B.] $\min\limits_{\by\in\X} \lbrace g(\by) + \frac{\rho+\rho_\epsilon}{2}\|\by - \bx \|^2 \rbrace < -\sigma_\epsilon$ for any $\epsilon^2$-feasible solution $\bx$ ($\bx\in\X$ and $g(\bx)\leq \epsilon^2$) for some positive constants $\sigma_\epsilon$ and $\rho_\epsilon$. We call this condition \emph{uniform Slater's condition}.\footnote{The original Slater's condition states that $g(\bar\by)<0$ for some $\bar\by\in X$. Here, our assumption is stronger because it includes the term $\rho_\epsilon\|\bar\by - \bx \|^2$ and requires that inequality holds for any $\epsilon^2$-feasible solution $\bx$.}
			\item[C.] The domain $\X$ is compact such that $\max_{\bx,\bx'\in\X}\|\bx-\bx'\|\leq D$ for some constant $D$.
			\item[D.] $f_{\text{lb}}\equiv\min_{\bx\in\X}f(\bx)>-\infty$.
			\item[E.] We have access to an initial $\epsilon^2$-feasible solution $\bx_{\text{feas}}$ with $\bx_{\text{feas}}\in\X$ and $g(\bx_{\text{feas}})\leq \epsilon^2$.
			\item[F.] $\|\bzt\|\leq M$ for a constant $M$ for any $\bzt\in\partial f_i(\bx)$, $\bx\in\X$, and $i=0,\dots,m$.
		\end{itemize}
	\end{assumption}
	
	A function is $\rho$-weakly convex if it is differentiable and the gradient is $\rho$-Lipchitz continuous. Hence, the two applications given in Section~\ref{sec:apps} satisfy Assumption~\ref{assume:stochastic}A when the loss functions $\phi$ and $\sigma$ are smooth. It is easy to show that $g$ defined in \eqref{eq:gco} is also $\rho$-weakly convex under Assumption~\ref{assume:stochastic}A.
	 A discussion about Assumption~\ref{assume:stochastic}E is given in Remark~\ref{eq:initialfeasible}.

	Under Assumption~\ref{assume:stochastic}, \eqref{eq:gco} is a non-convex constrained optimization problem so that even finding an $\epsilon$-feasible solution is difficult in general, let alone a globally optimal solution. For a non-convex problem, one alternative goal is to find a \emph{stationary} point of \eqref{eq:gco}, i.e., a point $\bx_*\in\mathcal{X}$ that satisfies the following Karush-Kuhn-Tucker conditions (KKT) conditions \citep[Theorem 28.3]{rockafellar1970convex}
	\begin{equation}
	\label{eq:KKT}
	\begin{aligned}
	-\bzt_0^*-\sum_{i=1}^m\lambda_i^*\bzt_i^*\in\mathcal{N}_\X(\bx_*)&, 
	\quad \lambda_i^*f_i(\bx_*)=0, \\
	f_i(\bx_*)\leq0&,\quad\lambda_i^*\geq0,
	\end{aligned}
	\end{equation}
	where $\lambda_i^*$ is the Lagrangian multiplier corresponding to the constraint $f_i(\bx)\leq 0$ for $i=1,\dots,m$ and $\bzt_i^*\in\partial f_i(\bx^*)$ for $i=0,1,\dots,m$.  
	Since an exact stationary point is hard to find with a finite number of iterations by many algorithms, it is more common to aim at finding an \emph{$\epsilon$-stationary} point, i.e., a point $\widehat\bx\in\mathcal{X}$ satisfying
	\small
	\begin{equation}
	\label{eq:eKKT}
	\begin{aligned}
	\text{Dist}\bigg(-\widehat\bzt_0-\sum_{i=1}^m\widehat\lambda_i\widehat\bzt_i,\mathcal{N}_\X(\widehat\bx)\bigg)\leq \epsilon&, \quad
	 |\widehat\lambda_if_i(\widehat\bx)|\leq \epsilon, \\ f_i(\widehat\bx)\leq\epsilon&,\quad\widehat\lambda_i\geq0,
	 \end{aligned}
	\end{equation}
	\normalsize
	where $\widehat\lambda_i$ is a Lagrangian multiplier corresponding to the constraint $f_i(\widehat\bx)\leq 0$ for $i=1,\dots,m$ and $\widehat\bzt_i\in\partial f_i(\widehat\bx)$ for $i=0,1,\dots,m$.  However, there are two difficulties that prevent algorithms from finding an $\epsilon$-stationary: \textbf{(i) Non-smoothness:} When $f_0$ is non-smooth, computing an $\epsilon$-stationary point with finitely many iterations is challenging even if $f_0$ is convex and there is no constraint, e.g.,  $\min_{x\in\reals}|x|$, where $0$ is an exact stationary point while an algorithm may still return an $x\approx 0$ but $\neq0$ which is not $\epsilon$-stationary for any $\epsilon<1$. \textbf{(ii) Non-convex constraints:} When non-convex constraints appear, it is difficult to numerically find a point $\widehat\bx$ that satisfies the third inequality in \eqref{eq:eKKT}. With a highly infeasible $\widehat\bx$, the other two inequalities in \eqref{eq:eKKT} become less meaningful. 
	
	Therefore, to study \eqref{eq:eKKT} in a more tractable setting,  we follow~\cite{Davis2018,davis2017proximally,davis2018complexity,zhang2018convergence} to make the weak convexity assumption in Assumption~\ref{assume:stochastic}A and consider a function $\varphi_{\hat{\rho}}$ and a solution $\widehat\bx$ defined as 
	\begin{equation}
	\begin{aligned}
	\label{eq:phi}
	\varphi_{\hat{\rho}} (\bx) \equiv \min_{\by\in\X} \big\{ &f(\by) + \frac{\hat{\rho}}{2}\|\by - \bx\|^2, \\
	&{s.t.}\quad g(\by) + \frac{\hat{\rho}}{2}\|\by - \bx\|^2\leq 0 \big\} ,\\
	\end{aligned}
	\end{equation}
	\begin{equation}
	\begin{aligned}
	\label{eq:phix}
	\widehat\bx \equiv \argmin_{\by\in\X} \big\{ &f(\by) + \frac{\hat{\rho}}{2}\|\by - \bx\|^2,\\ 
	&{s.t.}\quad g(\by) + \frac{\hat{\rho}}{2}\|\by - \bx\|^2\leq 0\big\},
	\end{aligned}
	\end{equation}
	where $\hat{\rho}\geq0$ is a \emph{regularization parameter}, $g$ and $f$ are defined as in \eqref{eq:gco}. 
	It is important to point out that $\varphi_{\hat{\rho}}$ is different from the Moreau envelope of the function\footnote{Here, $\mathbf{1}_{\X,g\leq 0}(\bx)$ denotes the indicator function of the feasible set $\{\bx\in\X|g(\bx)\leq 0\}$}  $f(\bx)+\mathbf{1}_{\X,g\leq 0}(\bx)$ which is defined as
	\small
	\begin{eqnarray}
	\label{eq:phi1}
	\tilde\varphi_{\hat{\rho}} (\bx) \equiv \min_{\by\in\X} \left\{ f(\by) + \frac{\hat{\rho}}{2}\|\by - \bx\|^2,\quad 
	\text{s.t.}\quad g(\by) \leq 0\right\}.
	\end{eqnarray}
	\normalsize
	The function $\tilde\varphi_{\hat{\rho}}$ was considered in~\cite{Davis2018,davis2017proximally,davis2018complexity,zhang2018convergence,rafique2018non} and their algorithm and analysis are based on the fact that \eqref{eq:phi1} is a convex minimization problem when there is no $g$, $f$ is $\rho$-weakly convex, and $\hat\rho\geq\rho$. However, for our problem \eqref{eq:gco} where $g$ exists and is $\rho$-weakly convex, \eqref{eq:phi1} is hard to evaluate even only approximately. Therefore, we include the term  $\frac{\hat{\rho}}{2}\|\by - \bx\|^2$ in the constraint of \eqref{eq:phi} and \eqref{eq:phix} so that the minimization problem has a $(\hat\rho-\rho)$-strongly convex objective function and $(\hat\rho-\rho)$-strongly convex constraints when $\hat\rho\geq\rho$. As a result of strong convexity, the solution $\widehat\bx$ defined in \eqref{eq:phix} is unique and can be closely approximated by solving \eqref{eq:phi} or \eqref{eq:phix}.
	
	As an extension to the findings in~\cite{Davis2018,davis2017proximally,davis2018complexity,zhang2018convergence,rafique2018non}, the quantity $\|\bx-\widehat\bx\|$ with $\widehat\bx$ defined in \eqref{eq:phix} can be used as a measure of the quality of a solution $\bx$. More specifically, let $\widehat\lambda$ be the Lagrangian multiplier that satisfies the following KKT conditions together with $\widehat\bx$ in \eqref{eq:phix}:
	\begin{equation}
	\begin{aligned}
	\label{eq:KKTprox}
	-\widehat\bzt_0  -\hat{\rho}(\widehat\bx - \bx)-\widehat\lambda \left(\widehat\bzt+\hat{\rho}(\widehat\bx - \bx)\right)\in\mathcal{N}_\X(\widehat\bx),\\
	\quad \widehat\lambda \left(g(\widehat\bx)+ \frac{\hat{\rho}}{2}\|\widehat\bx - \bx\|^2\right)=0,\\
	g(\widehat\bx)+ \frac{\hat{\rho}}{2}\|\widehat\bx - \bx\|^2\leq0, \\
	 \widehat\lambda\geq0
	\end{aligned}
	\end{equation}
	where $\widehat\bzt_0\in\partial f_0(\widehat\bx)$ and $\widehat\bzt\in\partial g(\widehat\bx)$.	These conditions imply
	\begin{align*}
	    	\text{Dist}(-\widehat\bzt_0-\widehat\lambda \widehat\bzt, \mathcal{N}_\X(\widehat\bx) )\leq(1+\widehat\lambda)\hat{\rho}\|\widehat\bx-\bx\|, \\
		 |\widehat\lambda g(\widehat\bx)|=\frac{\widehat\lambda\hat{\rho}}{2}\|\widehat\bx - \bx\|^2,\\ g(\widehat\bx)\leq0, \\
		  \widehat\lambda\geq0.
	\end{align*}
	Therefore, in the scenario where $\|\widehat\bx - \bx\|\leq\epsilon$, $\hat{\rho}=O(1)$ and $\widehat\lambda=O(1)$, we have $	\text{Dist}(-\widehat\bzt_0-\widehat\lambda \widehat\bzt, \mathcal{N}_\X(\widehat\bx) )=O(\epsilon)$, $|\widehat\lambda g(\widehat\bx)|=O(\epsilon^2)$, and $g(\widehat\bx)\leq0$, which means $\widehat\bx$ is feasible and satisfies the optimality conditions of the original problem \eqref{eq:gco} with $O(\epsilon)$ precision and $\bx$ is only $\epsilon$-away from $\widehat\bx$. With this property, we can say $\bx$ is near to an $\epsilon$-stationary point (i.e., $\widehat\bx$) of  \eqref{eq:gco}.  In Lemma~\ref{thm:boundlambda} below, we will show that $\widehat\lambda=O(1)$ when $\hat\rho\in(\rho,\rho+\rho_\epsilon]$ and $\bx$ is $\epsilon^2$-feasible under Assumption~\ref{assume:stochastic}B. We formally define the solution we want to compute as follows.

	\begin{definition}
		\label{def:stationary}
		A point $\bx\in\X$ is called a \textbf{nearly $\epsilon$-stationary point} of \eqref{eq:gco} if $\|\widehat\bx - \bx\|\leq\epsilon$ where $\widehat\bx$ is defined in \eqref{eq:phix} with respect to $\bx$ and $\hat\rho$. 
	\end{definition}
	Next, we propose a numerical method for finding a nearly $\epsilon$-stationary point of \eqref{eq:gco} with theoretical complexity analysis. The proofs for all theoretical results are given in the supplementary file.
	
	\section{Inexact Quadratically Regularized Constrained Method}
	
	The method we proposed is motivated by the recent studies on the inexact proximal methods by \cite{davis2017proximally,rafique2018non,lin2018solving} which originates from the proximal point method~\cite{citeulike:9472207}. The authors of \cite{davis2017proximally} consider $\min_{\bx\in\X}f(\bx)$ with a $\rho$-weakly convex and non-smooth $f(\bx)$. In their approach, given the iterate $\bx_t\in\mathcal{X}$, they generate the next iterate $\bx_{t+1}$ by approximately solving the following convex  subproblem  
	\begin{equation}\label{subminproblem}
	\bx_{t+1}\approx\argmin_{\by \in \X} f(\by)+\frac{\hat\rho}{2}\|\by-\bx_t\|_2^2
	\end{equation}
	using the standard stochastic subgradient (SSG) method. Then, $\bx_{t+1}$ will be used to construct the next subproblem in a similar way. Similar approaches have been developed for solving non-convex non-concave min-max problems by \cite{rafique2018non,lin2018solving}.
	
	Similar to their approaches, we will generate $\bx_{t+1}$ from $\bx_t$ by approximately solving 
	\small
	\begin{equation}
	\begin{aligned}
	\label{eq:phit}
	\bx_{t+1} \approx\widehat\bx_t\equiv \argmin_{\by\in\X} \big\{ &f(\by) + \frac{\hat{\rho}}{2}\|\by - \bx_t\|^2, \\ 
	&{s.t.}\quad g(\by) + \frac{\hat{\rho}}{2}\|\by - \bx_t\|^2\leq 0\big\}.
	\end{aligned}
	\end{equation}
	\normalsize
	However, the SSG method cannot be directly applied to \eqref{eq:phit} due to the constraints $g(\by) + \frac{\hat{\rho}}{2}\|\by - \bx_t\|^2\leq 0$. Thanks to the recent development in the first-order methods for nonlinear constrained convex optimization, there are existing techniques that can potentially be used as a subroutine to solve \eqref{eq:phit} in our main algorithm. To facilitate the description of our main algorithm and its anlaysis, we formally define the subroutine with the property we need as follows.
	\begin{definition}
		\label{def:subroutine}
		An algorithm $\mathcal{A}$ is called an \textbf{oracle for \eqref{eq:phit}} if, for any $t\geq0$, $\hat\rho>0$, $\hat\epsilon>0$, $\delta\in(0,1)$, and $\bx_t\in\X$, it finds (potentially stochastic)\footnote{Here, we allow $\mathcal{A}$ to be a stochastic algorithm.} $\bx_{t+1}\in\X$ such that, with a probability of at least $1-\delta$,
		\begin{align*}
		    f(\bx_{t+1}) + \frac{\hat{\rho}}{2}\|\bx_{t+1} - \bx_t\|^2-f(\widehat\bx_t) -\frac{\hat{\rho}}{2}\|\widehat\bx_t - \bx_t\|^2&\leq\hat\epsilon^2, \\
		    g(\bx_{t+1}) + \frac{\hat{\rho}}{2}\|\bx_{t+1} - \bx_t\|^2&\leq \hat\epsilon^2
		\end{align*}
		where $\widehat\bx_t$ is defined in \eqref{eq:phit}. We denote the output of $\mathcal{A}$ by $\bx_{t+1}=\mathcal{A}(\bx,\hat\rho,\hat\epsilon,\delta)$.
	\end{definition}

	Before we discuss which algorithms to use as the orcale, we first present the main algorithm, the inexact quadratically regularized constrained (IQRC) method, in Algorithm \ref{alg:iqrc} and analyze the number of iterations it needs for finding a nearly $\epsilon$-stationary point.
	
	\begin{algorithm}[tb]
		\caption{Inexact Quadratically Regularized Constrained (IQRC) Method}
		\label{alg:iqrc}
		\begin{algorithmic}[1]
			\STATE {\bfseries Input:} An $\epsilon^2$-feasible solution $\bx_0=\bx_{\text{feas}}$ (Assumption~\ref{assume:stochastic}E), $\hat\rho>\rho$, $\delta\in(0,1)$, $\hat\epsilon=\min\Big\{1,\sqrt{\frac{\hat\rho-\rho}{4}}\Big(\frac{M+\hat\rho D }{\sqrt{2 \sigma_{\epsilon} (\hat{\rho} - \rho)}}+1\Big)^{-\frac{1}{2}}\Big\}\epsilon $, the number of iterations $T$, and an oracle $\mathcal{A}$ for \eqref{eq:phit}.
			\FOR{$t=0,\dots,T-1$}
			\STATE $\bx_{t+1}=\mathcal{A}(\bx_t,\hat\rho,\hat\epsilon,\frac{\delta}{T})$
			\ENDFOR
			\STATE {\bfseries Output:} $\bx_R$ where $R$ is a random index uniformly sampled from $\lbrace 0,\dots,T \rbrace$.
		\end{algorithmic}
	\end{algorithm}
	
	The following lemma shows that the optimal Lagrangian multiplier of \eqref{eq:phit} is uniformly bounded for all $t$ under Assumption~\ref{assume:stochastic}. This is critical for establishing the convergence of Algorithm~\ref{alg:iqrc}.
	\begin{lemma}
		\label{thm:boundlambda}
		Suppose $\hat\rho\in(\rho,\rho+\rho_\epsilon]$. Let $\bx_t$ be generated by Algorithm~\ref{alg:iqrc}, $\widehat\bx_t$ be defined in \eqref{eq:phit}, and $\lambda_t$ be the Lagrangian multiplier in the KKT conditions \eqref{eq:KKTprox} of \eqref{eq:phit} satisfied by $\widehat\bx_t$. We have $\lambda_t \leq \frac{M+\hat\rho D }{\sqrt{2 \sigma_\epsilon (\hat{\rho} - \rho)}}$ for $t=0,1,2,\dots,T-1$ with a probability of at least $1-\delta$.
	\end{lemma}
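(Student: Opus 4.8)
The plan is to exploit the convexity of subproblem~\eqref{eq:phit}, which is guaranteed by $\hat\rho>\rho$, so that $\lambda_t$ is the multiplier of a genuinely convex program, and then to bound it by a Slater-type argument that is sharpened using the strong convexity of the proximally perturbed constraint. Write $F(\by)\equiv f(\by)+\frac{\hat\rho}{2}\|\by-\bx_t\|^2$ and $G(\by)\equiv g(\by)+\frac{\hat\rho}{2}\|\by-\bx_t\|^2$; under Assumption~\ref{assume:stochastic}A with $\hat\rho>\rho$ both are $(\hat\rho-\rho)$-strongly convex, so \eqref{eq:phit} is convex. The stationarity line of the KKT conditions~\eqref{eq:KKTprox} is exactly the first-order optimality condition for minimizing the Lagrangian $L(\by,\lambda_t)=F(\by)+\lambda_t G(\by)$ over $\X$, so $\widehat\bx_t\in\argmin_{\by\in\X}L(\by,\lambda_t)$; combining this with complementary slackness $\lambda_t G(\widehat\bx_t)=0$ yields the master inequality $F(\widehat\bx_t)\le F(\by)+\lambda_t G(\by)$ for every $\by\in\X$.

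First I would control feasibility of the iterates. Each call $\mathcal{A}(\bx_t,\hat\rho,\hat\epsilon,\delta/T)$ returns, with probability at least $1-\delta/T$, a point with $g(\bx_{t+1})\le g(\bx_{t+1})+\frac{\hat\rho}{2}\|\bx_{t+1}-\bx_t\|^2\le\hat\epsilon^2\le\epsilon^2$, the last step using $\hat\epsilon\le\epsilon$ from the input of Algorithm~\ref{alg:icpp}. Since $\bx_0=\bx_{\text{feas}}$ is $\epsilon^2$-feasible (Assumption~\ref{assume:stochastic}E), a union bound over the $T$ oracle calls shows that, with probability at least $1-\delta$, every $\bx_t$ is $\epsilon^2$-feasible. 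On this event Assumption~\ref{assume:stochastic}B applies at each $\bx_t$: letting $\bar\by_t$ attain $\min_{\by\in\X}\{g(\by)+\frac{\rho+\rho_\epsilon}{2}\|\by-\bx_t\|^2\}$ and using $\hat\rho\le\rho+\rho_\epsilon$, I get $G(\bar\by_t)\le g(\bar\by_t)+\frac{\rho+\rho_\epsilon}{2}\|\bar\by_t-\bx_t\|^2<-\sigma_\epsilon$, i.e.\ a strictly feasible Slater point with a quantitative margin.

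The heart of the argument is to convert this margin into a bound on $\lambda_t$. The naive choice $\by=\bar\by_t$ in the master inequality gives only $\lambda_t\le(F(\bar\by_t)-F(\widehat\bx_t))/\sigma_\epsilon$, which yields the wrong, squared dependence on $\sigma_\epsilon$ and $\hat\rho-\rho$. Instead I would test the master inequality along the segment $\by_\alpha=(1-\alpha)\widehat\bx_t+\alpha\bar\by_t$. Convexity of $F$ bounds the numerator by $\alpha(F(\bar\by_t)-F(\widehat\bx_t))$, while strong convexity of $G$ together with $G(\widehat\bx_t)\le0$ and $G(\bar\by_t)<-\sigma_\epsilon$ gives $-G(\by_\alpha)\ge\alpha\big[\sigma_\epsilon+\frac{\hat\rho-\rho}{2}(1-\alpha)r^2\big]$, where $r\equiv\|\bar\by_t-\widehat\bx_t\|$. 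Cancelling $\alpha$ and letting $\alpha\to0^+$ produces
\[
\lambda_t\le\frac{F(\bar\by_t)-F(\widehat\bx_t)}{\sigma_\epsilon+\frac{\hat\rho-\rho}{2}r^2}.
\]
For the numerator, the subgradient inequality for $F$ at $\bar\by_t$ with any $\bzt\in\partial F(\bar\by_t)=\partial f(\bar\by_t)+\hat\rho(\bar\by_t-\bx_t)$ gives $F(\bar\by_t)-F(\widehat\bx_t)\le\|\bzt\|\,r\le(M+\hat\rho D)r$ by Assumptions~\ref{assume:stochastic}F and~\ref{assume:stochastic}C. Substituting and maximizing the scalar map $r\mapsto(M+\hat\rho D)r/(\sigma_\epsilon+\frac{\hat\rho-\rho}{2}r^2)$ over $r\ge0$ — the maximum occurs at $r^\star=\sqrt{2\sigma_\epsilon/(\hat\rho-\rho)}$, where the denominator equals $2\sigma_\epsilon$ — gives exactly $\lambda_t\le(M+\hat\rho D)/\sqrt{2\sigma_\epsilon(\hat\rho-\rho)}$.

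I expect the main obstacle to be precisely this square-root scaling: one must resist plugging the Slater point directly and instead interpolate, so that the Slater margin $\sigma_\epsilon$ and the quadratic gain $\frac{\hat\rho-\rho}{2}r^2$ from strong convexity of $G$ both enter the denominator, after which the balance between a linearly growing numerator and a quadratically growing denominator yields the $\sqrt{\cdot}$. A secondary technical point is that $r=\|\bar\by_t-\widehat\bx_t\|$ is not known a priori, so the estimate must hold uniformly in $r$ and be closed by the scalar maximization rather than by any a priori bound such as $r\le D$.
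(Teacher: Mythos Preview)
Your argument is correct and reaches exactly the stated bound, but it takes a genuinely different route from the paper's proof. The paper works directly with the stationarity line of the KKT system: writing $\mathbf{0}=\bzt_F+\lambda_t(\bzt_G+\widehat\bzt_t/\lambda_t)$ for suitable subgradients and $\widehat\bzt_t\in\mathcal{N}_\X(\widehat\bx_t)$, it obtains $\lambda_t=\|\bzt_F\|/\|\bzt_G+\widehat\bzt_t/\lambda_t\|$. The numerator is bounded by $M+\hat\rho D$ exactly as you do; the denominator is \emph{lower}-bounded by applying the $(\hat\rho-\rho)$-strong convexity of $G+\mathbf{1}_\X$ at $\widehat\bx_t$ toward the Slater point $\tilde\bx_t$, dropping $G(\widehat\bx_t)=0$ via complementary slackness, and then using Young's inequality $\langle a,b\rangle\ge -\tfrac{\|a\|^2}{2(\hat\rho-\rho)}-\tfrac{(\hat\rho-\rho)}{2}\|b\|^2$ to cancel the quadratic in $\|\tilde\bx_t-\widehat\bx_t\|$, yielding $\|\bzt_G+\widehat\bzt_t/\lambda_t\|^2\ge 2\sigma_\epsilon(\hat\rho-\rho)$.

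By contrast, you never touch the stationarity equation or the normal-cone vector: you use only the Lagrangian-minimizer inequality $F(\widehat\bx_t)\le F(\by)+\lambda_t G(\by)$, interpolate along the segment to $\bar\by_t$, and let the strong convexity of $G$ produce the extra $\tfrac{\hat\rho-\rho}{2}r^2$ in the denominator before optimizing over $r$. Your scalar maximization in $r$ plays exactly the role that the paper's Young inequality plays, and both devices are what turn the naive $1/\sigma_\epsilon$ bound into the sharper $1/\sqrt{\sigma_\epsilon(\hat\rho-\rho)}$ scaling. Your approach is arguably more elementary and robust (it avoids manipulating $\widehat\bzt_t/\lambda_t$ and works purely from primal optimality and feasibility), while the paper's norm-ratio argument is shorter once the KKT identity is in hand. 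Both the feasibility-of-iterates step and the use of $\hat\rho\le\rho+\rho_\epsilon$ to invoke Assumption~\ref{assume:stochastic}B match the paper's treatment.
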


	\begin{theorem}
		\label{thm:main}
		Under Assumption~\ref{assume:stochastic}, Algorithm~\ref{alg:iqrc} guarantees $	\mathbb{E}_R\| \bx_R - \widehat\bx_R \|^2\leq\epsilon^2$ with a probability of at least $1-\delta$ if $T \geq  \frac{4(f(\bx_0) - f_{\text{lb}})}{\epsilon^2(\hat\rho-\rho)}$, where the expectation is taken over $R$.
	\end{theorem}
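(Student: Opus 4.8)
The plan is to run a proximal-point–style descent on the objective values $f(\bx_t)$, in which the one-step decrease is controlled from below by the stationarity measure $\tfrac{\hat\rho-\rho}{2}\|\bx_t-\widehat\bx_t\|^2$, then telescope and average. First I would fix the ``good event'' that all $T$ oracle calls succeed; since each fails with probability at most $\delta/T$, a union bound gives this event probability at least $1-\delta$, and I argue on it. Two structural facts are recorded. (i) Every iterate is $\epsilon^2$-feasible: the oracle guarantees $g(\bx_{t+1})+\tfrac{\hat\rho}{2}\|\bx_{t+1}-\bx_t\|^2\le\hat\epsilon^2$, and since the quadratic term is nonnegative and $\hat\epsilon\le\epsilon$ we get $g(\bx_{t+1})\le\hat\epsilon^2\le\epsilon^2$; together with Assumption~\ref{assume:stochastic}E for $\bx_0$ this makes Lemma~\ref{thm:boundlambda} applicable at every $t$, so the optimal multiplier obeys $\lambda_t\le\Lambda:=\tfrac{M+\hat\rho D}{\sqrt{2\sigma_\epsilon(\hat\rho-\rho)}}$. (ii) The KKT system \eqref{eq:KKTprox} holds for $\widehat\bx_t$ with multiplier $\lambda_t$.

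The heart of the argument is a per-iteration inequality relating $\varphi_{\hat\rho}(\bx_t)$ to $f(\bx_t)$. Consider the subproblem Lagrangian at the fixed multiplier $\lambda_t$, namely $\mathcal{L}_t(\by)=f(\by)+\tfrac{\hat\rho}{2}\|\by-\bx_t\|^2+\lambda_t\big(g(\by)+\tfrac{\hat\rho}{2}\|\by-\bx_t\|^2\big)$. Because $f$ and $g$ are $\rho$-weakly convex and $\lambda_t\ge0$, the map $f+\lambda_t g$ is $(1+\lambda_t)\rho$-weakly convex, and adding the two proximal terms makes $\mathcal{L}_t$ exactly $(1+\lambda_t)(\hat\rho-\rho)$-strongly convex. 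By \eqref{eq:KKTprox}, $\widehat\bx_t$ minimizes $\mathcal{L}_t$ over $\X$, so the strong-convexity inequality evaluated at $\by=\bx_t$ reads
\[
\mathcal{L}_t(\bx_t)\ge \mathcal{L}_t(\widehat\bx_t)+\tfrac{(1+\lambda_t)(\hat\rho-\rho)}{2}\|\bx_t-\widehat\bx_t\|^2.
\]
Evaluating both sides, using $\mathcal{L}_t(\bx_t)=f(\bx_t)+\lambda_t g(\bx_t)$ and, by complementary slackness, $\mathcal{L}_t(\widehat\bx_t)=\varphi_{\hat\rho}(\bx_t)$, yields the key bound $f(\bx_t)+\lambda_t g(\bx_t)\ge \varphi_{\hat\rho}(\bx_t)+\tfrac{(1+\lambda_t)(\hat\rho-\rho)}{2}\|\bx_t-\widehat\bx_t\|^2$.

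Next I feed in the oracle's objective guarantee, which unwinds to $\varphi_{\hat\rho}(\bx_t)\ge f(\bx_{t+1})+\tfrac{\hat\rho}{2}\|\bx_{t+1}-\bx_t\|^2-\hat\epsilon^2\ge f(\bx_{t+1})-\hat\epsilon^2$, and use $1+\lambda_t\ge1$ to obtain the one-step descent
\[
f(\bx_{t+1})+\tfrac{\hat\rho-\rho}{2}\|\bx_t-\widehat\bx_t\|^2\le f(\bx_t)+\lambda_t g(\bx_t)+\hat\epsilon^2.
\]
Summing over $t=0,\dots,T-1$ telescopes the objective; bounding $f(\bx_T)\ge f_{\text{lb}}$ (Assumption~\ref{assume:stochastic}D), $\lambda_t\le\Lambda$, and the \emph{sharp} feasibility $g(\bx_t)\le\hat\epsilon^2$ for $t\ge1$ (with $g(\bx_0)\le\epsilon^2$) leaves $\tfrac{\hat\rho-\rho}{2}\sum_{t=0}^{T-1}\|\bx_t-\widehat\bx_t\|^2\le f(\bx_0)-f_{\text{lb}}+\Lambda\big((T-1)\hat\epsilon^2+\epsilon^2\big)+T\hat\epsilon^2$. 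Dividing by $T$ turns the left side into $\tfrac{\hat\rho-\rho}{2}\cdot\tfrac1T\sum_{t=0}^{T-1}\|\bx_t-\widehat\bx_t\|^2$, which up to the harmless indexing of $R$ is $\tfrac{\hat\rho-\rho}{2}\,\E_R\|\bx_R-\widehat\bx_R\|^2$. The definition of $\hat\epsilon$ is calibrated precisely so that $\tfrac{2(\Lambda+1)\hat\epsilon^2}{\hat\rho-\rho}\le\tfrac{\epsilon^2}{2}$, absorbing every $\hat\epsilon$-dependent term, while the requirement $T\ge\tfrac{4(f(\bx_0)-f_{\text{lb}})}{\epsilon^2(\hat\rho-\rho)}$ drives the leading $\tfrac{f(\bx_0)-f_{\text{lb}}}{T}$ term below $\tfrac{\epsilon^2}{2}$, giving $\E_R\|\bx_R-\widehat\bx_R\|^2\le\epsilon^2$.

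The main obstacle is the key per-iteration inequality, which hinges entirely on the existence of a \emph{finite, uniformly bounded} KKT multiplier $\lambda_t$ for the strongly convex, Slater-feasible subproblem~\eqref{eq:phit}; this is exactly the content of Lemma~\ref{thm:boundlambda}, so the whole argument is downstream of that lemma and the uniform Slater condition. Two further points demand care: (a) one must use the sharper bound $g(\bx_t)\le\hat\epsilon^2$ for $t\ge1$ rather than the crude $\epsilon^2$, since otherwise the accumulated slack $\sum_t\lambda_t g(\bx_t)$ would contribute an $O(\epsilon^2)$ nonvanishing term to the average rather than an $O(\hat\epsilon^2)$ one; and (b) the probabilistic bookkeeping via the union bound over the $T$ stochastic oracle calls, ensuring that feasibility of all iterates, the multiplier bound of Lemma~\ref{thm:boundlambda}, and the descent inequality all hold simultaneously with probability at least $1-\delta$.
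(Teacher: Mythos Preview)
Your proposal is correct and follows essentially the same route as the paper: establish the good event by a union bound over the $T$ oracle calls, use $(1+\lambda_t)(\hat\rho-\rho)$-strong convexity of the subproblem Lagrangian together with complementary slackness to derive the per-iteration descent inequality, invoke Lemma~\ref{thm:boundlambda} for a uniform multiplier bound, telescope, and calibrate $\hat\epsilon$ and $T$ to close. You are in fact slightly more careful than the paper at $t=0$, where only $g(\bx_0)\le\epsilon^2$ (not $\hat\epsilon^2$) is available from Assumption~\ref{assume:stochastic}E; the resulting extra term $\tfrac{2\Lambda\epsilon^2}{T(\hat\rho-\rho)}$ is of lower order $O(\epsilon^4)$ and does not affect the conclusion, though you should state this explicitly rather than sweep it into ``absorbing every $\hat\epsilon$-dependent term.''
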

	\begin{remark}
	\label{eq:initialfeasible}
	Algorithm~\ref{alg:iqrc} requires the access to an $\epsilon^2$-feasible solution $\bx_0=\bx_{\text{feas}}$ (Assumption~\ref{assume:stochastic}E).
	When solving \eqref{eq:gco} without an initial feasible solution, a typical guarantee of an algorithm (e.g. \citep{cartis2011evaluation,cartis2014complexity}) is that it either finds an $\epsilon$-feasible and $\epsilon$-stationary point of \eqref{eq:gco} or finds a point which is an $\epsilon$-stationary point of $g$ but infeasible to \eqref{eq:gco}. In the later case, the solution is typically trapped in a local minimum of $g$ where $g(\bx)$ is not small, which can happen due to non-convexity of $g$. Therefore, when $\bx_{\text{feas}}$ is not available, our method will have such type of guarantee as long as a subgadient method (e.g. \cite{Davis2018}) is first applied to $\min_{\bx \in \mathcal{X}} g(\bx)$ with $\mathcal{O}(\frac{1}{\epsilon^4})$ iterations, which will return a nearly $\epsilon$-stationary point of $g$, denoted by $\bx_{\text{temp}}$. Then if  $g(\bx_{\text{temp}})\leq\epsilon^2$, we start Algorithm~\ref{alg:iqrc} with $\bx_0=\bx_{\text{temp}}$. If not, we are in the second case mentioned above, namely, we have found a nearly $\epsilon$-stationary point of $g$ which is infeasible to \eqref{eq:gco}, and $\bx_{\text{temp}}$ is returned as the final output. Adding this step to our method does not change the order of magnitude of its complexity. 
	\end{remark}
	
	According to Theorem~\ref{thm:main}, in order to find an nearly $\epsilon$-stationary point in expectation, we have to call the oracle $\mathcal{A}$ $O(1/\epsilon^2)$ times. Therefore, the totally complexity of Algorithm~\ref{alg:iqrc} highly depends on the complexity of $\mathcal{A}$ for a given $\epsilon$. In the next sections, we will discuss the methods that can be used as  $\mathcal{A}$ when $f_i$ have different properties.
	
	\subsection{Oracle for Deterministic Problem}
	In this section, we assume that we can calculate any $\bzt\in\partial f_i(\bx)$ for any $\bx\in\X$.
	We define 
	\begin{equation}
	\begin{aligned}
	\label{eq:FandG}
	F(\bx):= f(\bx) + \frac{\hat{\rho}}{2}\|\bx - \bx_t \|^2,\\
	G(\bx):= g(\bx) + \frac{\hat{\rho}}{2}\|\bx - \bx_t \|^2
	\end{aligned}
	\end{equation}  
	so that problem~\eqref{eq:phit} becomes $\min_{\bx\in\X}F(\bx)\text{ s.t. }G(\bx)\leq0$. We define $F'(\bx)$ and $G'(\bx)$ as any subgradient of $F$ and $G$, respectively. Under Assumption~\ref{assume:stochastic}C and Assumption~\ref{assume:stochastic}F, we have $\|F'(\bx)\|\leq M+\hat{\rho}D$ and $\|G'(\bx)\|\leq M+\hat{\rho}D$ for any $\bx\in\X$.
	
	Because problem~\eqref{eq:phit} is non-smooth, we consider the Polyak’s switching subgradient method~\cite{swtichgradientpolyak}, which is also analyzed in~\cite{nesterov2013introductory} and recently extended by~\cite{bayandina2018mirror,Lan2016}. The method we propose here is a new variant of that method for a strongly convex problem. The details are given in Algorithm~\ref{alg:MD} where $\text{Proj}_\X(\bx)$ represents the projection of $\bx$ to $\X$.
	Different from~\cite{bayandina2018mirror}, our Algorithm~\ref{alg:MD} only uses a single loop instead of double loops. It is also different from~\cite{Lan2016} in the sense that our method keeps every intermediate solution $\epsilon$-feasible for \eqref{eq:phi} while the method in  \cite{Lan2016}  only ensures  $\epsilon$-feasibility after a fixed number of iterations. Moreover, \cite{Lan2016} requires knowing the total number of iterations before hand in order to design the step size $\gamma_k$ while our method does not. 
	
	\begin{algorithm}[tb]
				\caption{Switching subgradient method for the subproblem \eqref{eq:phit}}
				\label{alg:MD}
				\begin{algorithmic}[1]
					\STATE {\bfseries Input:} $\bz_0=\bx_t\in \X$, $\hat\rho>\rho$ and $\hat\epsilon>0$.
					\STATE Set $I=\emptyset$ and $F$ and $G$ as in \eqref{eq:FandG}.
					\STATE Set \small$K =\left\lceil \frac{4(M^2+\hat{\rho}D^2)}{(\hat\rho-\rho)\hat\epsilon^2}\right\rceil$\normalsize
					\FOR{$k=0,\dots,K-1$}
					\STATE $\gamma_k = \frac{2}{(\hat\rho-\rho)(k+2)}$
					\IF{$G(\bz_k) \leq \hat\epsilon^2$}
					\STATE $I\leftarrow I\cup\{k\}$.
					\STATE $\bz_{k+1} = \text{Proj}_\X (\bz_k - \gamma_k F'(\bz_k))$
					\ELSE
					\STATE $\bz_{k+1} = \text{Proj}_\X (\bz_k - \gamma_k G'(\bz_k))$
					\ENDIF
					\ENDFOR
					\STATE {\bfseries Output:} $\bx_{t+1} = \frac{\sum_{k \in I} (k+1)\bz_k}{\sum_{k \in I} (k+1)}$.
				\end{algorithmic}
			\end{algorithm}

	The convergence of  Algorithm~\ref{alg:MD} is given below whose proof follows the idea of Section 3.2 in~\cite{lacoste2012simpler}. However, the original analysis in \cite{lacoste2012simpler} is for the subgradient method applied to unconstrained problems while our analysis is for the switching subgradient method applied to constrained problems.
	\begin{theorem}\label{thm:mdconverge}
		Under Assumption~\ref{assume:stochastic}, Algorithm~\ref{alg:MD} guarantees $F(\bx_{t+1}) - F(\widehat\bx_t)\leq \hat\epsilon^2$ and $G(\bx_{t+1}) \leq \hat\epsilon^2$ deterministically and can be used as an oracle $\mathcal{A}$ for \eqref{eq:phit}. The complexity of Algorithm~\ref{alg:iqrc} using Algorithm~\ref{alg:MD} as an oracle is therefore $O(\frac{1}{\epsilon^4})$.
	\end{theorem}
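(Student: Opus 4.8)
The plan is to first establish a one-step recursion for Algorithm~\ref{alg:MD} that compares each iterate $\bz_k$ to the unique feasible minimizer $\widehat\bx_t$ of \eqref{eq:phit}, and then to combine these recursions through a weighted telescoping argument tuned to the strong convexity of $F$ and $G$.

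\textbf{Step 1 (one-step inequality).} Write $\mu:=\hat\rho-\rho>0$. By Assumption~\ref{assume:stochastic}A both $F$ and $G$ in \eqref{eq:FandG} are $\mu$-strongly convex (hence convex), and by Assumptions~\ref{assume:stochastic}C,F their subgradients are bounded; set $C:=M^2+\hat\rho D^2$, so the one-step RHS below carries a $\gamma_k^2 C$ term. For the projection step $\bz_{k+1}=\text{Proj}_\X(\bz_k-\gamma_k H'(\bz_k))$ (with $H=F$ when $k\in I$ and $H=G$ otherwise), the optimality condition of the projection together with $\mu$-strong convexity of $H$ gives, for every $\bx\in\X$,
\[
\gamma_k\Big(H(\bz_k)-H(\bx)+\tfrac{\mu}{2}\|\bx-\bz_k\|^2\Big)\le \gamma_k^2 C+\tfrac12\|\bz_k-\bx\|^2-\tfrac12\|\bz_{k+1}-\bx\|^2 .
\]
I would specialize $\bx=\widehat\bx_t$ and abbreviate $a_k:=\|\bz_k-\widehat\bx_t\|^2$, $\Delta_k^F:=F(\bz_k)-F(\widehat\bx_t)$, $\Delta_k^G:=G(\bz_k)-G(\widehat\bx_t)$. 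Since $\widehat\bx_t$ is feasible for \eqref{eq:phit}, $G(\widehat\bx_t)\le0$, so on a constraint step ($k\notin I$, i.e. $G(\bz_k)>\hat\epsilon^2$) one has $\Delta_k^G\ge G(\bz_k)>\hat\epsilon^2$.

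\textbf{Step 2 (weighted telescoping).} Dividing the displayed inequality by $\gamma_k=\tfrac{2}{\mu(k+2)}$ and multiplying by the weight $(k+1)$, the strong-convexity term exactly corrects the mismatch in the distance coefficients and yields, with the potential $\phi_k:=\tfrac{\mu k(k+1)}{4}a_k$,
\[
(k+1)\big(H(\bz_k)-H(\widehat\bx_t)\big)\le (k+1)\gamma_k C+\phi_k-\phi_{k+1}.
\]
Summing over $k=0,\dots,K-1$, using $\phi_0=0$, $\phi_K\ge0$, and $(k+1)\gamma_k\le 2/\mu$, the potential telescopes away and I obtain the master bound
\[
\sum_{k\in I}(k+1)\Delta_k^F+\sum_{k\notin I}(k+1)\Delta_k^G\le \frac{2CK}{\mu}.
\]

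\textbf{Step 3 (extracting the oracle guarantee).} Let $S_I:=\sum_{k\in I}(k+1)$ and $S_{I^c}:=\sum_{k\notin I}(k+1)$, so $S_I+S_{I^c}=\tfrac{K(K+1)}{2}$. Using $\Delta_k^G\ge\hat\epsilon^2$ on constraint steps in the master bound gives $\sum_{k\in I}(k+1)\Delta_k^F\le \tfrac{2CK}{\mu}-\hat\epsilon^2 S_{I^c}$. The value $K=\lceil 4C/(\mu\hat\epsilon^2)\rceil$ in Algorithm~\ref{alg:MD} is chosen precisely so that $\tfrac{2CK}{\mu}\le\hat\epsilon^2\tfrac{K(K+1)}{2}=\hat\epsilon^2(S_I+S_{I^c})$; substituting this bound yields $\sum_{k\in I}(k+1)\Delta_k^F\le\hat\epsilon^2 S_I$. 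The same counting shows $I\neq\emptyset$: if $I=\emptyset$ the master bound forces $\hat\epsilon^2\tfrac{K(K+1)}{2}\le\tfrac{2CK}{\mu}$, i.e. $K+1\le 4C/(\mu\hat\epsilon^2)\le K$, a contradiction. Hence $S_I>0$; dividing by $S_I$ and invoking Jensen's inequality for the convex $F$ at the output $\bx_{t+1}=\big(\sum_{k\in I}(k+1)\bz_k\big)/S_I$ gives $F(\bx_{t+1})-F(\widehat\bx_t)\le\hat\epsilon^2$, while applying Jensen to the convex $G$ over the same average (where every $k\in I$ satisfies $G(\bz_k)\le\hat\epsilon^2$) gives $G(\bx_{t+1})\le\hat\epsilon^2$. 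Both hold deterministically, so Algorithm~\ref{alg:MD} is an oracle in the sense of Definition~\ref{def:subroutine} for any $\delta$. I expect the main obstacle here to be exactly this: the objective-step iterates are only $\hat\epsilon^2$-feasible, so individual gaps $\Delta_k^F$ can be negative and one cannot simply lower-bound the number of productive steps; the resolution is to \emph{not} separate $S_I$ from $S_{I^c}$, but to fold the constraint-step budget $\hat\epsilon^2 S_{I^c}$ back in through the identity $S_I+S_{I^c}=K(K+1)/2$ and the precise choice of $K$.

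\textbf{Step 4 (overall complexity).} Finally I would combine this with Theorem~\ref{thm:main}. Since $\hat\epsilon=\Theta(\epsilon)$, guaranteeing a nearly $\epsilon$-stationary point in expectation requires $T=O(1/\epsilon^2)$ outer iterations, and each invokes Algorithm~\ref{alg:MD} with $K=\lceil 4C/(\mu\hat\epsilon^2)\rceil=O(1/\epsilon^2)$ inner iterations, each costing $O(1)$ subgradient evaluations and one projection. The total is therefore $T\cdot K=O(1/\epsilon^4)$.
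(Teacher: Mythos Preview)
Your proposal is correct and follows essentially the same approach as the paper's proof: the same one-step strongly convex subgradient inequality, the same weight $(k+1)$ producing the telescoping potential $\tfrac{\mu k(k+1)}{4}\|\bz_k-\widehat\bx_t\|^2$, the same master bound, and the same rearrangement using $\Delta_k^G\ge\hat\epsilon^2$ on constraint steps together with the choice of $K$ to conclude $\sum_{k\in I}(k+1)\Delta_k^F\le\hat\epsilon^2 S_I$. Your explicit verification that $I\neq\emptyset$ is a nice addition that the paper omits but implicitly relies on when dividing by $\sum_{k\in I}(k+1)$.
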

	\begin{remark}
	\label{eq:smoothcase}
	Although the main focus of this paper is the case when $f_i$ is non-smooth in \eqref{eq:gco} for $i=0,\dots,m$, our results can be easily extended to the case where each $f_i$ is differentiable with an $L$-Lipschitz continuous gradient. In this case, the subproblem \eqref{eq:phit} is written as computing
	\small
	\begin{equation*}
	\begin{aligned}
	\label{eq:phitsmooth}
	\bx_{t+1} \approx\widehat\bx_t\equiv &\argmin_{\by\in\X} \big\{ f_0(\by) + \frac{\hat{\rho}}{2}\|\by - \bx_t\|^2,\\ 
	&{s.t.}\quad f_i(\by) + \frac{\hat{\rho}}{2}\|\by - \bx_t\|^2\leq 0,~i=1,\dots,m\big\}.
	\end{aligned}
	\end{equation*}
	\normalsize
	Since the objective function and constraint functions here are all strongly convex and smooth, there exist some algorithms that can be used as an oracle for \eqref{eq:phit} satisfying Definition~\eqref{def:subroutine}. The examples include the level-set method~\cite{lin2018levelsiam} and the augmented Lagrangian method~\cite{xu2017global} whose complexity for computing  $\bx_{t+1}=\mathcal{A}(\bx,\hat\rho,\hat\epsilon,\delta)$ is $O(\frac{1}{\hat\epsilon})$. Since $\hat\epsilon=O(\epsilon)$, the complexity of Algorithm~\ref{alg:iqrc} using \cite{lin2018levelsiam}  or \cite{xu2017global} as the oracle is $O(\frac{1}{\epsilon^2})\times O(\frac{1}{\hat\epsilon})=O(\frac{1}{\epsilon^3})$. 
	\end{remark}
	
	\subsection{Oracle for Stochastic Problem}
	\vspace{-2mm}
	In this section, we consider the scenario where only a stochastic unbiased estimation for the subgradient of $f_i$ is available. In addition to Assumption~\ref{assume:stochastic}, we make the following assumption.
	\begin{assumption}
		\label{assume:stochasticnew}
		For any $\bx\in\X$ and any $i=0,1\dots,m$, we can compute a stochastic estimation $\theta_i(\bx)$ and a stochastic gradient $\bzt_i(\bx)$ of $f_i$ such that $\mathbb{E}\theta_i(\bx)=f_i(\bx)$ and $\mathbb{E}\bzt_i(\bx)\in\partial f_i(\bx)$. Moreover, there exist constants $M_0$ and $M_1$ such that $\|(\theta_1(\bx),\theta_2(\bx),\dots,\theta_m(\bx))\|\leq M_0$ and 
		$\|\bzt_i(\bx)\|\leq M_1$ for any $\bx$ almost surely.
	\end{assumption}
	A typical situation where this assumption holds is the stochastic optimization where $f_i\equiv \mathbb{E}F_i(\bx,\xi)$ and $\xi$ is a random variable. In that case, we can sample $\xi$ and compute $\theta_i(\bx)=F_i(\bx,\xi)$ and compute $\bzt_i(\bx)$ as a subgradient of $F_i(\bx,\xi)$ with respect to $\bx$.
	
	Under this setting, when solving the subproblem \eqref{eq:phit}, it is not possible to construct an unbiased stochastic subgradient for $g$ in \eqref{eq:gco} or $G$ in \eqref{eq:FandG} due to the maximization operator in their definitions. Hence, we treat each $f_i$ as an individual function and define 
	\begin{equation*}
	    F_i(\bx):= f_i(\bx) + \frac{\hat{\rho}}{2}\|\bx - \bx_t \|^2\text{ for }i=0,1,\dots,m
	\end{equation*}
	so that problem~\eqref{eq:phit} becomes $\min_{\bx\in\X}F(\bx)\text{ s.t. }F_i(\bx)\leq0$ for $i=0,1,\dots,m$.
	Note that $F_i$ is still $(\hat\rho-\rho)$-strongly convex. Its stochastic estimation is $\theta_i(\bx)+\frac{\hat{\rho}}{2}\|\bx - \bx_t \|^2$ which satisfies 
	$\|(\theta_i(\bx)+\frac{\hat{\rho}}{2}\|\bx - \bx_t \|^2)_{i=1}^m\|\leq\|(\theta_i(\bx))_{i=1}^m\| +\frac{\hat{\rho}\sqrt{m}}{2}\|\bx - \bx_t \|^2\leq M_0+\frac{\hat{\rho}\sqrt{m}D^2}{2}\equiv \tilde M_0$. Its stochastic gradient is $\bzt_i(\bx)+\hat\rho(\bx-\bx_t)$ which satisfies $\|\bzt_i(\bx)+\hat\rho(\bx-\bx_t)\|\leq M_1+\hat\rho D\equiv \tilde M_1$.

	The switching subgradient method (Algorithm~\ref{alg:MD}) and its variants~\cite{bayandina2018mirror,Lan2016} cannot handle stochastic constraints functions unless a large high-cost mini-batch is used per iteration~\cite{Lan2016}. Therefore, we consider using the online stochatsic subgradient method by~\cite{yu2017online} which allows for both stochastic objective function and stochastic constraints. We present their method in Algorithm~\ref{alg:Yu} and analyze the complexity of Algorithm~\ref{alg:iqrc} when using their method as the oracle.
	
	\begin{algorithm}[tb]
				\caption{Online stochastic subgradient method by \cite{yu2017online} for the subproblem \eqref{eq:phit}}
				\label{alg:Yu}
				\begin{algorithmic}[1]
					\STATE {\bfseries Input:} $\bz_0=\bx_t\in \X$, $\hat\rho>\rho$, $\hat\epsilon>0$, and the number of iterations $K$.
					\STATE Set  $V=\sqrt{K}$ and $\alpha=K$.
					\STATE Set $Q_0^i=0$ for $i=1,\dots,m$.
					\FOR{$k=0,\dots,K-1$}
					\STATE $\tilde\theta_i^k=\theta_i(\bz_k)+\frac{\hat\rho}{2}\|\bz_k-\bx_t\|^2$ and $\tilde\bzt_i^k=\bzt_i(\bz_k)+\hat\rho(\bz_k-\bx_t)$ for $i=0,1,2,\dots,m$.
					\STATE 
					\small
					$
					\bz_{k+1}=\argmin\limits_{\bz\in \X}\left\{
					\begin{array}{l}
					\big(V\bzt_0^k+\sum\limits_{i=1}^mQ_k^i\bzt_i^k\big)^\top (\bz-\bz_k)\\
					+\alpha\|\bz-\bz_k\|^2
					\end{array}
					\right\}
					$
					\normalsize
					\STATE \small$Q_{k+1}^i=\max\{Q_k^i+\tilde\theta_i^k+(\tilde\bzt_i^k)^\top(\bz_{k+1}-\bz_k),0\}$ $i=1,2,\dots,m$\normalsize.
					\ENDFOR
					\STATE {\bfseries Output:} $\bx_{t+1} = \frac{1}{K}\sum_{k=0}^{K-1}\bz_k$.
				\end{algorithmic}
			\end{algorithm}

	\begin{theorem}
		\label{thm:Yu}
		Under Assumption~\ref{assume:stochastic} and \ref{assume:stochasticnew}, Algorithm~\ref{alg:Yu} guarantees $F(\bx_{t+1}) - F(\widehat\bx_t)\leq \mathcal{B}_1(D,\tilde M_0,\tilde M_1,m,\sigma_\epsilon,K,\delta)$ and $F_i(\bx_{t+1}) \leq \mathcal{B}_2(D,\tilde M_0,\tilde M_1,m,\sigma_\epsilon,K,\delta)$ for functions $\mathcal{B}_1(D,\tilde M_0,\tilde M_1,m,\sigma_\epsilon,K,\delta)=O(\frac{\log(K/\delta)}{\sqrt{K}})$ and $\mathcal{B}_1(D,\tilde M_0,\tilde M_1,m,\sigma_\epsilon,K,\delta)=O(\frac{\log(K/\delta)}{\sqrt{K}})$ with a probability of at least $1-\delta$. As a consequence, when $K$ is large enough (i.e. $K=\tilde O(\frac{1}{\hat\epsilon^4}\log(\frac{1}{\delta}))$) so that $\mathcal{B}_1\leq \hat\epsilon^2$ and $\mathcal{B}_2\leq \hat\epsilon^2$, Algorithm~\ref{alg:Yu} can be used as an oracle $\mathcal{A}$ for \eqref{eq:phit}. The complexity of Algorithm~\ref{alg:iqrc} using Algorithm~\ref{alg:Yu} as an oracle is therefore $\tilde O(\frac{1}{\epsilon^6})$. 
	\end{theorem}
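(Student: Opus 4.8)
The plan is to reduce the statement to the convergence guarantee of the online stochastic subgradient method of~\cite{yu2017online} applied to the subproblem~\eqref{eq:phit}, written as $\min_{\bx\in\X} F(\bx)$ subject to $F_i(\bx)\leq 0$ for $i=1,\dots,m$. First I would verify that this instance meets the hypotheses of~\cite{yu2017online}: each $F_i$ is convex (in fact $(\hat\rho-\rho)$-strongly convex) on the compact set $\X$, and by Assumption~\ref{assume:stochasticnew} together with the bounds computed just before the theorem, the stochastic function values $\tilde\theta_i^k$ and stochastic subgradients $\tilde\bzt_i^k$ used in Algorithm~\ref{alg:Yu} are bounded almost surely by $\tilde M_0$ and $\tilde M_1$, respectively. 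These boundedness constants, together with the diameter $D$ and the number of constraints $m$, will enter the final bounds $\mathcal{B}_1,\mathcal{B}_2$.

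The key structural input is a Slater condition for the subproblem, which controls the constant in the rate of~\cite{yu2017online}. Here I would use that every iterate $\bx_t$ passed to the oracle by Algorithm~\ref{alg:icpp} is $\epsilon^2$-feasible; this follows by the same induction as in Lemma~\ref{thm:boundlambda}, starting from $\bx_0=\bx_{\text{feas}}$ (Assumption~\ref{assume:stochastic}E) and using that the oracle output satisfies $g(\bx_{t+1})\leq\hat\epsilon^2\leq\epsilon^2$ since $\hat\epsilon\leq\epsilon$. Given $\epsilon^2$-feasibility of $\bx_t$, Assumption~\ref{assume:stochastic}B combined with $\hat\rho\leq\rho+\rho_\epsilon$ produces a point $\by\in\X$ with $F_i(\by)\leq G(\by) = g(\by)+\frac{\hat\rho}{2}\|\by-\bx_t\|^2 < -\sigma_\epsilon$ for every $i$, so the subproblem admits a strictly feasible point with a uniform Slater margin $\sigma_\epsilon$ that is independent of $t$.

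With these ingredients, I would invoke the analysis of~\cite{yu2017online} with the parameter choice $V=\sqrt{K}$, $\alpha=K$ used in Algorithm~\ref{alg:Yu}. Their drift-plus-penalty argument bounds the virtual queues $Q_k^i$ using the Slater point above and yields both an objective gap $F(\bx_{t+1})-F(\widehat\bx_t)$ and a constraint violation $F_i(\bx_{t+1})$ of order $O(1/\sqrt{K})$ in expectation, with constants depending on $D,\tilde M_0,\tilde M_1,m,\sigma_\epsilon$. To obtain the probability-$(1-\delta)$ guarantee demanded by Definition~\ref{def:subroutine}, I would pass to the high-probability versions of these bounds, applying an Azuma/Freedman-type concentration inequality to the martingale differences generated by the stochastic subgradients and to the queue drift; this is exactly the step that introduces the $\log(K/\delta)$ factor and gives $\mathcal{B}_1,\mathcal{B}_2=O(\log(K/\delta)/\sqrt{K})$. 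Since $G(\bx_{t+1})=\max_i F_i(\bx_{t+1})\leq\mathcal{B}_2$, the bound on the individual constraints immediately implies the single-constraint bound on $G$ required by Definition~\ref{def:subroutine}.

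Finally, solving $O(\log(K/\delta)/\sqrt{K})\leq\hat\epsilon^2$ for $K$ gives $K=\tilde O(\hat\epsilon^{-4}\log(1/\delta))$, which certifies that Algorithm~\ref{alg:Yu} is an oracle $\mathcal{A}$ for~\eqref{eq:phit} at precision $\hat\epsilon$. For the overall complexity I would combine this with Theorem~\ref{thm:main}: Algorithm~\ref{alg:icpp} runs $T=O(\epsilon^{-2})$ outer iterations with $\hat\epsilon=\Theta(\epsilon)$ and per-call failure probability $\delta/T$, so each oracle call costs $\tilde O(\epsilon^{-4}\log(T/\delta))=\tilde O(\epsilon^{-4})$ inner iterations, and the total is $T\cdot K=\tilde O(\epsilon^{-6})$. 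I expect the main obstacle to be the upgrade from the in-expectation guarantees of~\cite{yu2017online} to high-probability guarantees, i.e.\ controlling the stochastic virtual-queue process and the accumulated subgradient noise simultaneously with a concentration argument, which is what fixes the logarithmic overhead in $K$.
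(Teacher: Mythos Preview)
Your proposal is correct and follows essentially the same route as the paper: verify the Slater condition for the subproblem via Assumption~\ref{assume:stochastic}B and $\epsilon^2$-feasibility of $\bx_t$, invoke the analysis of~\cite{yu2017online} with $V=\sqrt{K}$, $\alpha=K$, obtain $O(\log(K/\delta)/\sqrt{K})$ high-probability bounds on the objective gap and constraint violations, and then multiply the resulting $K=\tilde O(\hat\epsilon^{-4})$ by $T=O(\epsilon^{-2})$. The one place you slightly overcomplicate matters is the step you flag as the ``main obstacle'': the paper does not re-derive the high-probability bounds via a fresh Azuma/Freedman argument, but simply cites Theorem~4 of~\cite{yu2017online} for the objective bound $\mathcal{B}_1$, equation~(22) of~\cite{yu2017online} for the constraint bound in terms of the queue norms, and Theorem~3 of~\cite{yu2017online} for the high-probability queue bound $\|(Q_k^i)_i\|\leq\sqrt{K}\,\Lambda$ with $\Lambda=O(\log(K/\delta))$; the concentration work is already done in that reference.
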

	Since functions $\mathcal{B}_1$ and $\mathcal{B}_2$ are complicated, we put them in \eqref{eq:B1} and \eqref{eq:B2} in the supplementary file.
	
	\section{Numerical Experiments}
	\label{sec:exp}
In this section, we evaluate the numerical performance of the proposed methods on a multi-class Neyman-Pearson classification (mNPC) problem with nonconvex loss. Let $\xi_k$ for $k=1,2,\dots,K$ denote $K$ classes of data, each of which belongs to a subset of training data $D_k$. We train $K$ linear models $\bx_k$, $k=1,2,\dots K$, and then predict the class of data $\xi$ by $\argmax_{k=1,2,\dots,K} \bx_k^\top \xi$.  To achieve a high classification accuracy, the value $\bx_k^\top \xi_k-\bx_l^\top \xi_k$ needs to be positively large for any $k \neq l$ and $\xi_k \in D_k$~\citep{weston1998multi,crammer2002learnability}, which can be achieved by minimizing the following loss 
\small
\begin{align*}
	\frac{1}{|D_k|}\sum_{l \neq k}\sum_{\xi_k \in D_k} \phi(\bx_k^\top \xi_k-\bx_l^\top \xi_k),
\end{align*}
\normalsize
where $\phi$ is a non-increasing  potentially non-convex loss function. mNPC prioritizes minimizing the loss on one class, which is class 1 in our formulation, and then controls the loss on all other classes by solving  
    \small
	\begin{equation}
	\begin{aligned}
	\label{eq:NPclassification_multi}
	&\min_{\|\bx_k\|_2\leq\lambda,k=1,\dots,K}\frac{1}{|\mathcal{D}_1|}\sum_{l\neq k}\sum_{\xi\in\mathcal{D}_1}\phi(\bx_k^\top \xi-\bx_l^\top \xi), \\
	&\mathrm{s.t.}~\frac{1}{|\mathcal{D}_k|}\sum_{l\neq k}\sum_{\xi\in\mathcal{D}_k}\phi(\bx_k^\top \xi-\bx_l^\top \xi)\leq r_k,\quad k=2,3,\dots,K,
	\end{aligned}
    \end{equation}
    \normalsize
	where $r_k$ controls the loss of class $k$ and $\lambda$ is the regularization parameter. In the experiment, the function $\phi$ in~\eqref{eq:NPclassification_multi} is chosen as the sigmoid function $1/(1+\text{exp}(z))$. 
	

	We compare our IQRC method to the exact penalty method proposed in \citep{cartis2011evaluation}. Both methods are
implemented in Matlab on a 64-bit MacOS Catalina machine with a 2.90 Ghz Intel Core i7-6920HQ CPU and 16GB of memory. We conduct experiments on three LIBSVM multi-class classification datasets \emph{pendigits}, \emph{segment}, and \emph{usps}. \emph{pendigits} dataset has 7494 instances and 10 classes, while each instance is represented by a feature vector of dimension 16. \emph{segment} dataset has 2310 instances and 7 classes, and each instance is represented by a feature vector of dimension 7. \emph{usps} dataset has 7291 instances and 10 classes while each instance has 256 number of features. 
We choose $r_k = 4.5, k=2,\dots, K$ for \emph{pendigits} and \emph{usps}, and $r_k = 3, k=2,\dots, K$ for \emph{segment}. $\lambda$ is selected to be 0.1 for all datasets. For both algorithms in comparison, the initial solution $\bx = \mathbf{0}$ is chosen. It is easy to verify that $\bx = \mathbf{0}$ is a feasible solution given the $\lambda$ and $r_k$ we choose. For both algorithms, we tune hyper-parameters from a discrete set of choices.
	
 To simplify notations, let $f_0(\bx)$ denote the objective function of problem~\eqref{eq:NPclassification_multi}. Similarly,  let $f_i(\bx) \leq 0$ for $i=1,\dots,2K-1$ denote all constraints of problem~\eqref{eq:NPclassification_multi} including both functional constraints and compact set constraints. Then exact penalty method proposed by \citep{cartis2011evaluation} first derives a direction $\mathbf{s}_k$ by solving 
    \small
	\begin{equation}
	\begin{aligned}
	   \mathbf{s}_{k}\in\argmin_{\|\mathbf{s}\|\leq \Delta_k}
    \big\{
    &f_0(\bx_{k})+\nabla f_0(\bx_{k})^\top\mathbf{s} \\ 
    + &p \sum_{i=1}^{2K-1}\left | \max \lbrace 0, f_i(\bx_{k})+\nabla f_i(\bx_{k})^\top\mathbf{s} \rbrace \right |
    \big\},
    \end{aligned}
    \label{eqn: trust_region_sk}
	\end{equation}
	\normalsize
	where $p > 0$ is the penalty parameter and $\Delta_k > 0$ is the radius. When $\mathbf{s}_k$ is derived, a trust region method is used to update the current solution $\bx_k$ by performing $\bx_{k+1} = \bx_{k} + \mathbf{s_k}$ if this update can significantly reduce the value of the function
	\small
    \begin{align*}
    	f_0(\bx) +&p \sum_{i=1}^{2K-1}\left |\max \lbrace 0, f_i(\bx) \rbrace \right |.
    \end{align*}
    \normalsize
	In our implementation, we reformulate problem \eqref{eqn: trust_region_sk} as a linear programming problem and then use the Matlab built-in solver to obtain $\mathbf{s}_k$. The exact penalty method in \citep{cartis2011evaluation} requires several hyper-parameters including a steering parameter $\xi$, an increase factor $\tau$ to update the penalty parameter, an initial penalty parameter $p_{-1}$, and the tolerance $\epsilon$. 
    After tunning $\xi$ and $\tau$, we set $\xi = 0.1, \tau = 100, p_{-1} = 1 / \xi$, and $\epsilon=0.001$ for \textit{pendigits} and \textit{segment}. For \textit{usps}, we choose $\xi = 0.02, \tau = 100, p_{-1} = 1 / \xi$, and $\epsilon=0.001$. The trust-region algorithm in \cite{cartis2011evaluation} specifies only the interval in which the trust-region radius should fall in, and there are many possible choices. In the experiment, we specifically follow the rule that $\Delta_{k+1} = \Delta_k$ if $r_k \geq \eta_2$, $\Delta_{k+1} = \gamma_2 \Delta_k$ if $r_k \in [\eta_1, \eta_2]$, and $\Delta_{k+1} = \gamma_1 \Delta_k$ if $r_k < \eta_1$.
    The trust region subproblem also requires several control parameters. For all datasets, we use $\Delta_0 = 1, \eta_1 = 0.3, \eta_2 = 0.7, \gamma_1 = 0.5, \gamma_2 = 1$. (See \cite{cartis2011evaluation} for the definitions of $\Delta_0, \eta_1, \eta_2, \gamma_1$, and $\gamma_2$).
	
	For our proposed method, the subproblem in the IQRC method is solved using the switching subgradient method (Algorithm~\ref{alg:MD}). After tunning regularization parameter $\hat\rho$ and inner iteration number $K$, we set $\hat{\rho} = 1$, $K = 20000$, and tolerance $\epsilon = 0.001$ for all datasets.
	
	\begin{figure}[t]
    \centering
    {\includegraphics[scale=.2]{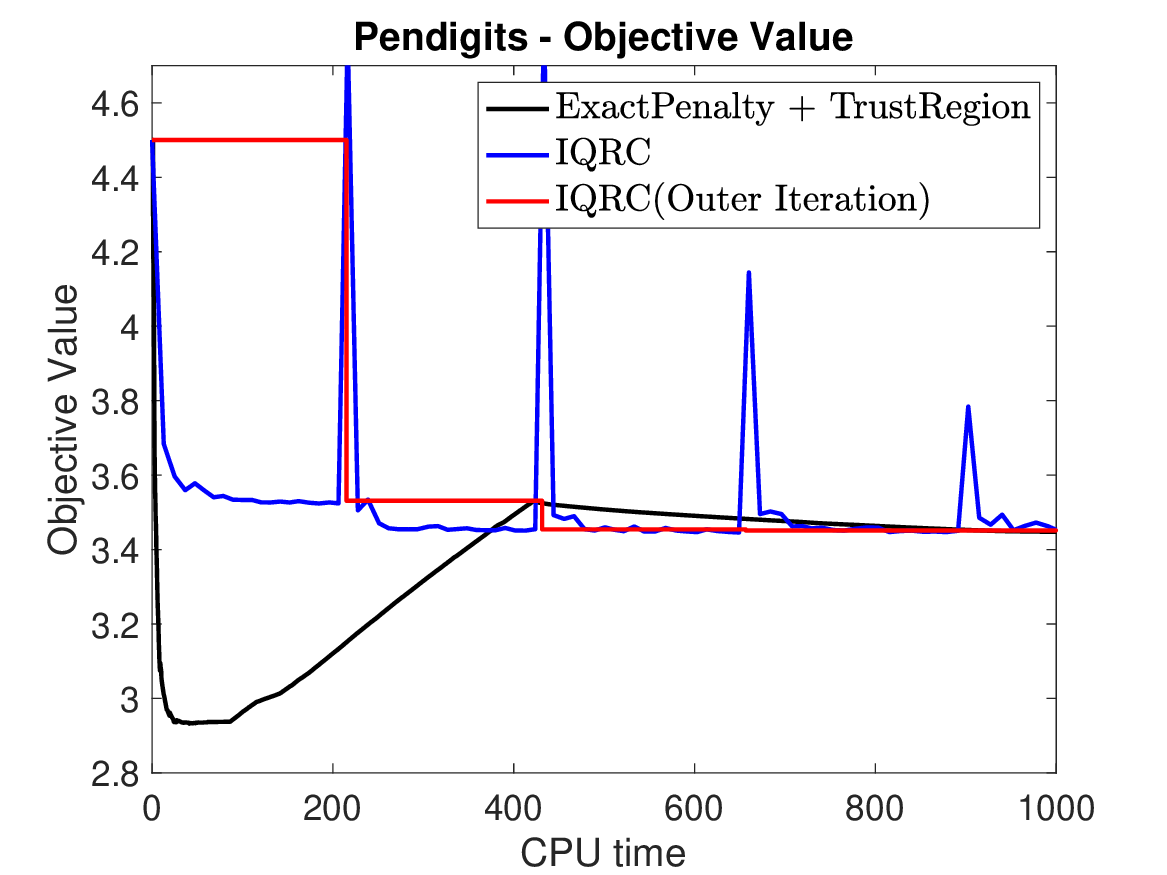}}
    {\includegraphics[scale=.2]{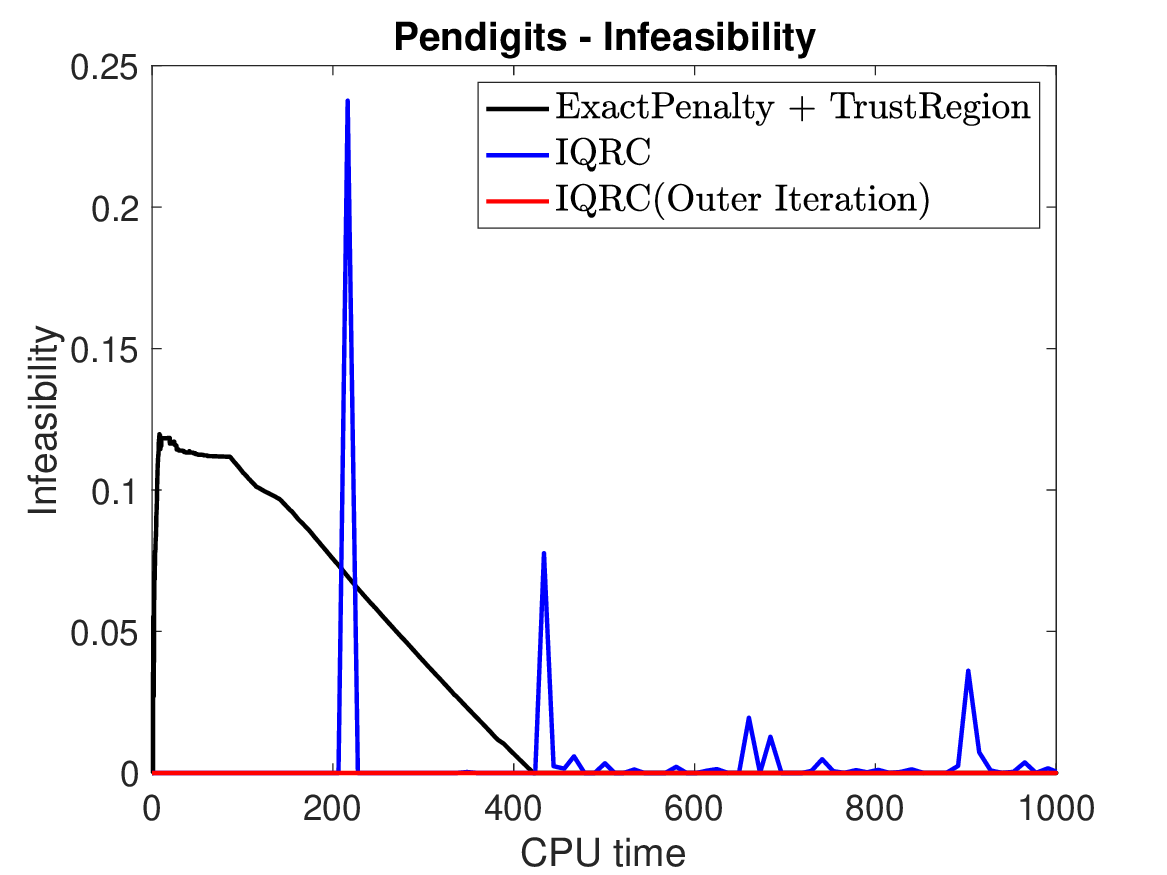}}
    {\includegraphics[scale=.2]{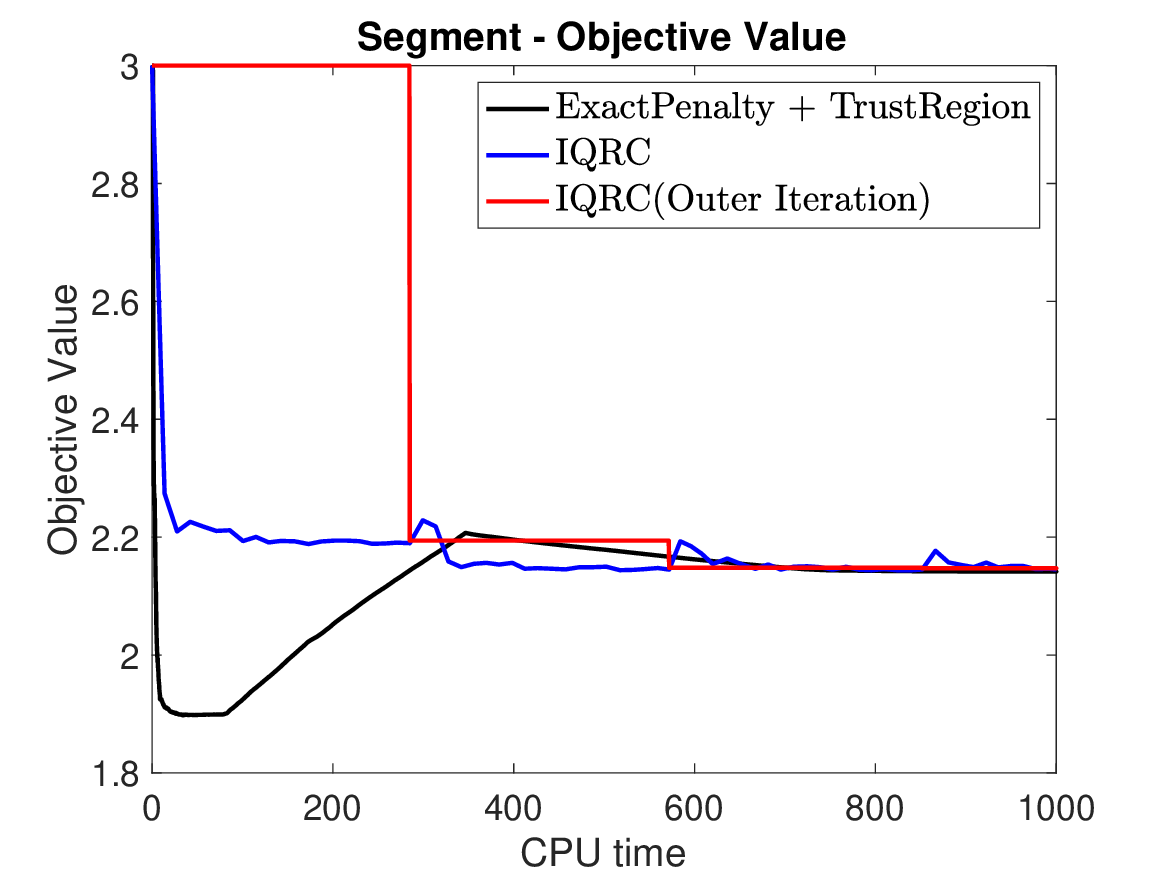}}
    {\includegraphics[scale=.2]{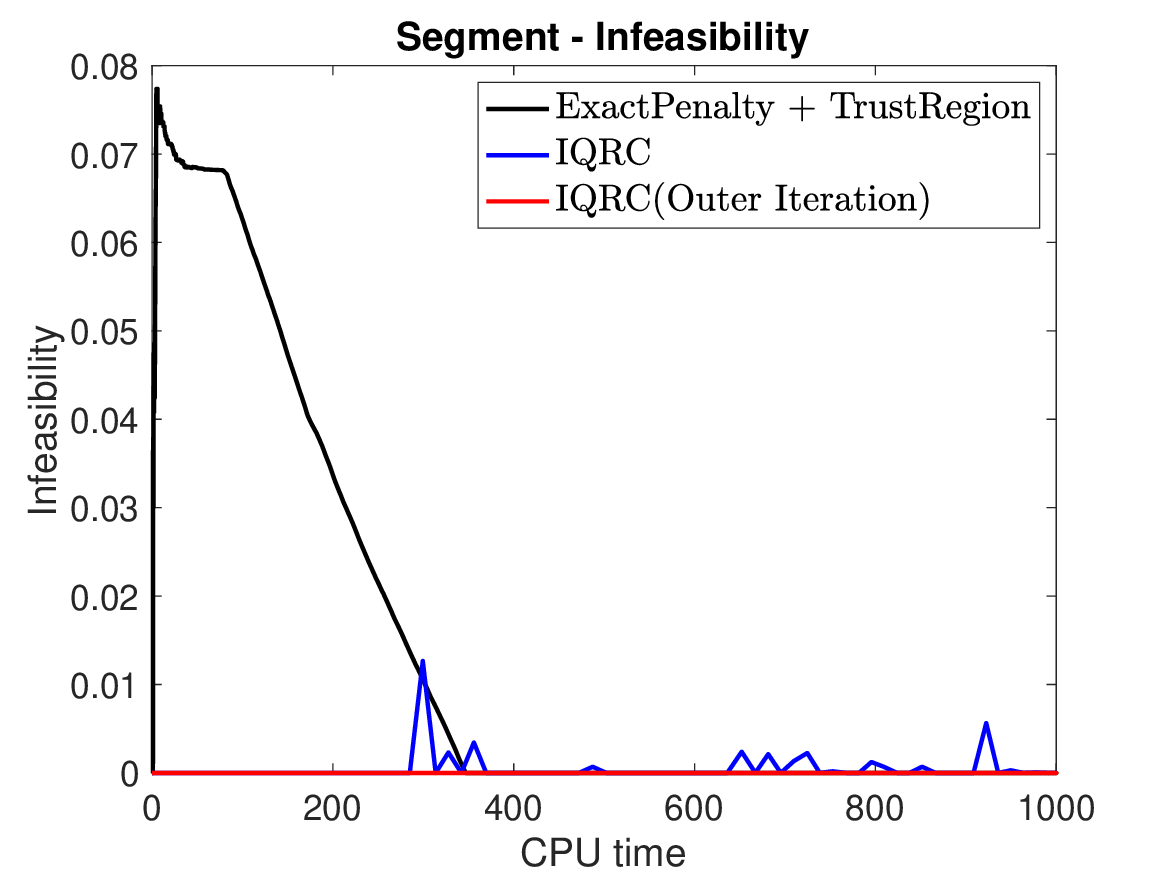}}
    {\includegraphics[scale=.2]{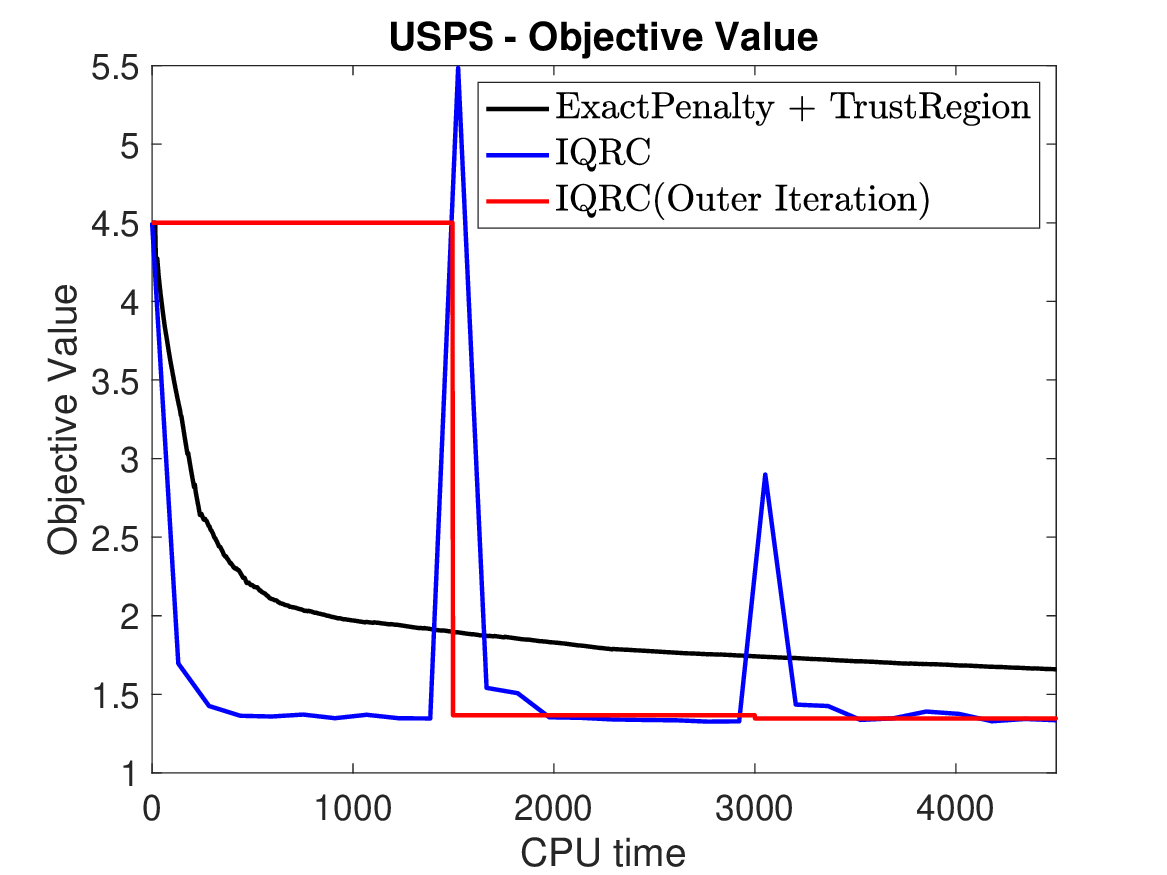}}
    {\includegraphics[scale=.2]{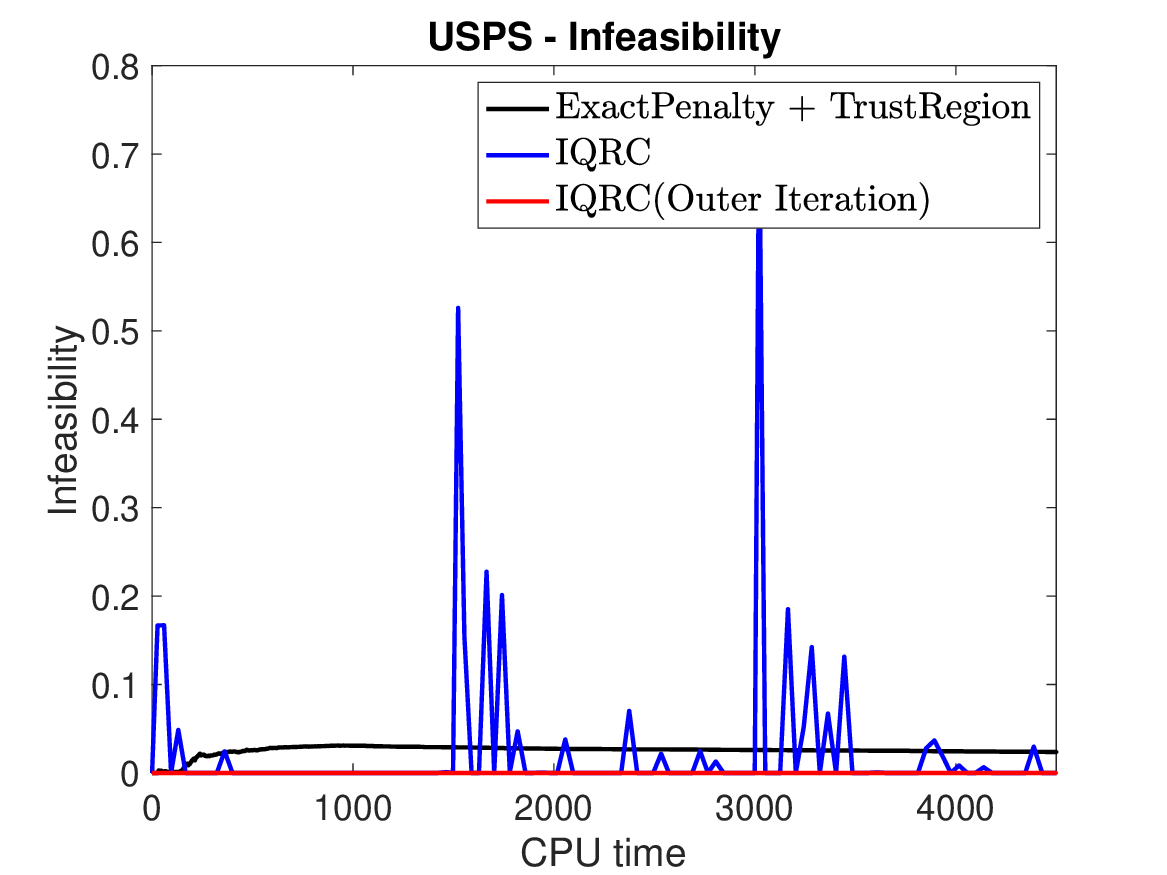}}
    \caption{Comparison between the IQRC method and the exact penalty method by \cite{cartis2011evaluation} for solving multi-class Neyman-Pearson classification problem \eqref{eq:NPclassification_multi}.} 
	\label{fig:neyman_pearson}
    \end{figure}
    
    The numerical results are presented in Figure \ref{fig:neyman_pearson}. The $x$-axis represents the CPU time that each algorithm took. The $y$-axis on left column of Figure  \ref{fig:neyman_pearson} represents the objective value of \eqref{eq:NPclassification_multi} and the $y$-axis on right column of Figure  \ref{fig:neyman_pearson} represents infeasibility, i.e., $\max \lbrace \max_{i=1,\dots,m}f_i(\bx), \max_{k=1,\dots,K} \|\bx_k\|-\lambda, 0 \rbrace  $, of the iterates. The red line shows the performance of the outer iteration solution $\bx_t$ in Algorithm \ref{alg:iqrc}, which is the solution users need in practice. The blue line evaluates the performance of inner iteration $\bz_t$ in Algorithm \ref{alg:MD}. We show it here only for reader's curiosity. The black line represents the performance of the exact penalty method proposed by \cite{cartis2011evaluation}.  We conclude from Figure~\ref{fig:neyman_pearson} that, for these three instances, the IQRC method outperformed the exact penalty method in terms of the capability of reducing the objective value and infeasiblity.

	\section{Conclusion}
	\label{sec:con}
	Continuous optimization models with nonlinear constraints have been widely used in many areas including machine learning, statistics and operations research. When non-convex functional constraints appear, even finding a feasible solution is challenging. In this paper, we proposed a class of quadratically regularized subgradient method which can find a nearly stationary point for functional constrained non-convex problems.  The complexity to find a nearly stationary point for both deterministic case and stochastic case are derived: (i) when each function $f_i$ is a deterministic function, we proposed a new variant of switching subgradient to solve strongly convex subproblem and the total complexity of Algorithm~\ref{alg:iqrc} for finding a nearly $\epsilon$-stationary point is $\mathcal{O}(\frac{1}{\epsilon^4})$, (ii) when each function $f_i$ is given as an expectation of a stochastic function, we analyzed stochastic subgradient method by \cite{yu2017online} for solving subproblem and the total complexity of Algorithm~\ref{alg:iqrc} for finding a nearly $\epsilon$-stationary point is $\tilde{\mathcal{O}}(\frac{1}{\epsilon^6})$.

	\subsubsection*{Acknowledgments}
	%
	T. Yang is partially supported by National Science Foundation CAREER Award 1844403.
	
    \bibliographystyle{icml2020}
	\bibliography{SIP,references,iclr2019_conference}

\begin{thebibliography}{57}
\providecommand{\natexlab}[1]{#1}
\providecommand{\url}[1]{\texttt{#1}}
\expandafter\ifx\csname urlstyle\endcsname\relax
  \providecommand{\doi}[1]{doi: #1}\else
  \providecommand{\doi}{doi: \begingroup \urlstyle{rm}\Url}\fi

\bibitem[Allen-Zhu(2017)]{DBLP:conf/icml/Allen-Zhu17}
Allen-Zhu, Z.
\newblock Natasha: Faster non-convex stochastic optimization via strongly
  non-convex parameter.
\newblock In \emph{Proceedings of the 34th International Conference on Machine
  Learning (ICML)}, pp.\  89--97, 2017.

\bibitem[Allen-Zhu \& Hazan(2016)Allen-Zhu and Hazan]{DBLP:conf/icml/ZhuH16}
Allen-Zhu, Z. and Hazan, E.
\newblock Variance reduction for faster non-convex optimization.
\newblock In \emph{Proceedings of the 33nd International Conference on Machine
  Learning (ICML)}, pp.\  699--707, 2016.

\bibitem[Bayandina et~al.(2018)Bayandina, Dvurechensky, Gasnikov, Stonyakin,
  and Titov]{bayandina2018mirror}
Bayandina, A., Dvurechensky, P., Gasnikov, A., Stonyakin, F., and Titov, A.
\newblock Mirror descent and convex optimization problems with non-smooth
  inequality constraints.
\newblock In \emph{Large-Scale and Distributed Optimization}, pp.\  181--213.
  Springer, 2018.

\bibitem[Bertsekas(1999)]{bertsekas1999nonlinear}
Bertsekas, D.~P.
\newblock \emph{Nonlinear programming}.
\newblock Athena scientific Belmont, 1999.

\bibitem[Bertsekas(2014)]{bertsekas2014constrained}
Bertsekas, D.~P.
\newblock \emph{Constrained optimization and Lagrange multiplier methods}.
\newblock Academic press, 2014.

\bibitem[Boob et~al.(2019)Boob, Deng, and Lan]{boob2019proximal}
Boob, D., Deng, Q., and Lan, G.
\newblock Optimization with convex and nonconvex functional constraints.
\newblock \emph{arXiv preprint arXiv:1908.02734}, 2019.

\bibitem[Cartis et~al.(2011)Cartis, Gould, and Toint]{cartis2011evaluation}
Cartis, C., Gould, N.~I., and Toint, P.~L.
\newblock On the evaluation complexity of composite function minimization with
  applications to nonconvex nonlinear programming.
\newblock \emph{SIAM Journal on Optimization}, 21\penalty0 (4):\penalty0
  1721--1739, 2011.

\bibitem[Cartis et~al.(2014)Cartis, Gould, and Toint]{cartis2014complexity}
Cartis, C., Gould, N.~I., and Toint, P.~L.
\newblock On the complexity of finding first-order critical points in
  constrained nonlinear optimization.
\newblock \emph{Mathematical Programming}, 144\penalty0 (1-2):\penalty0
  93--106, 2014.

\bibitem[Chen et~al.(2016)Chen, Yang, Lin, Zhang, and Chang]{chen2016optimal}
Chen, J., Yang, T., Lin, Q., Zhang, L., and Chang, Y.
\newblock Optimal stochastic strongly convex optimization with a logarithmic
  number of projections.
\newblock In \emph{Proceedings of the Thirty-Second Conference on Uncertainty
  in Artificial Intelligence}, pp.\  122--131. AUAI Press, 2016.

\bibitem[Chen et~al.(2018)Chen, Yang, Yi, Zhou, and Chen]{chen18stagewise}
Chen, Z., Yang, T., Yi, J., Zhou, B., and Chen, E.
\newblock Universal stagewise learning for non-convex problems with convergence
  on averaged solutions.
\newblock \emph{CoRR}, /abs/1808.06296, 2018.

\bibitem[Crammer \& Singer(2002)Crammer and Singer]{crammer2002learnability}
Crammer, K. and Singer, Y.
\newblock On the learnability and design of output codes for multiclass
  problems.
\newblock \emph{Machine learning}, 47\penalty0 (2-3):\penalty0 201--233, 2002.

\bibitem[Davis \& Drusvyatskiy(2018{\natexlab{a}})Davis and
  Drusvyatskiy]{Davis2018}
Davis, D. and Drusvyatskiy, D.
\newblock Stochastic subgradient method converges at the rate $ o (k^{-1/4}) $
  on weakly convex functions.
\newblock \emph{arXiv preprint arXiv:1802.02988}, 2018{\natexlab{a}}.

\bibitem[Davis \& Drusvyatskiy(2018{\natexlab{b}})Davis and
  Drusvyatskiy]{davis2018complexity}
Davis, D. and Drusvyatskiy, D.
\newblock Complexity of finding near-stationary points of convex functions
  stochastically.
\newblock \emph{arXiv preprint arXiv:1802.08556}, 2018{\natexlab{b}}.

\bibitem[Davis \& Drusvyatskiy(2018{\natexlab{c}})Davis and
  Drusvyatskiy]{davis2018stochastic}
Davis, D. and Drusvyatskiy, D.
\newblock Stochastic model-based minimization of weakly convex functions.
\newblock \emph{arXiv preprint arXiv:1803.06523}, 2018{\natexlab{c}}.

\bibitem[Davis \& Grimmer(2017)Davis and Grimmer]{davis2017proximally}
Davis, D. and Grimmer, B.
\newblock Proximally guided stochastic subgradient method for nonsmooth,
  nonconvex problems.
\newblock \emph{arXiv preprint arXiv:1707.03505}, 2017.

\bibitem[Drusvyatskiy(2017)]{drusvyatskiy2017proximal}
Drusvyatskiy, D.
\newblock The proximal point method revisited.
\newblock \emph{arXiv preprint arXiv:1712.06038}, 2017.

\bibitem[Drusvyatskiy \& Paquette(2018)Drusvyatskiy and
  Paquette]{Drusvyatskiy2018}
Drusvyatskiy, D. and Paquette, C.
\newblock Efficiency of minimizing compositions of convex functions and smooth
  maps.
\newblock \emph{Mathematical Programming}, Jul 2018.

\bibitem[Fercoq et~al.(2019)Fercoq, Alacaoglu, Necoara, and
  Cevher]{fercoq2019almost}
Fercoq, O., Alacaoglu, A., Necoara, I., and Cevher, V.
\newblock Almost surely constrained convex optimization.
\newblock \emph{arXiv preprint arXiv:1902.00126}, 2019.

\bibitem[Ghadimi \& Lan(2013)Ghadimi and Lan]{DBLP:journals/siamjo/GhadimiL13a}
Ghadimi, S. and Lan, G.
\newblock Stochastic first- and zeroth-order methods for nonconvex stochastic
  programming.
\newblock \emph{{SIAM} Journal on Optimization}, 23\penalty0 (4):\penalty0
  2341--2368, 2013.

\bibitem[Ghadimi \& Lan(2016)Ghadimi and Lan]{DBLP:journals/mp/GhadimiL16}
Ghadimi, S. and Lan, G.
\newblock Accelerated gradient methods for nonconvex nonlinear and stochastic
  programming.
\newblock \emph{Math. Program.}, 156\penalty0 (1-2):\penalty0 59--99, 2016.

\bibitem[Goh et~al.(2016)Goh, Cotter, Gupta, and
  Friedlander]{goh2016satisfying}
Goh, G., Cotter, A., Gupta, M., and Friedlander, M.~P.
\newblock Satisfying real-world goals with dataset constraints.
\newblock In \emph{Advances in Neural Information Processing Systems}, pp.\
  2415--2423, 2016.

\bibitem[Lacoste-Julien(2016)]{lacoste2016convergence}
Lacoste-Julien, S.
\newblock Convergence rate of frank-wolfe for non-convex objectives.
\newblock \emph{arXiv preprint arXiv:1607.00345}, 2016.

\bibitem[Lacoste-Julien et~al.(2012)Lacoste-Julien, Schmidt, and
  Bach]{lacoste2012simpler}
Lacoste-Julien, S., Schmidt, M., and Bach, F.
\newblock A simpler approach to obtaining an $\mathcal{O}(1/t)$ convergence
  rate for the projected stochastic subgradient method.
\newblock \emph{arXiv preprint arXiv:1212.2002}, 2012.

\bibitem[Lan \& Yang(2018)Lan and Yang]{DBLP:journals/corr/abs/1805.05411}
Lan, G. and Yang, Y.
\newblock Accelerated stochastic algorithms for nonconvex finite-sum and
  multi-block optimization.
\newblock \emph{CoRR}, abs/1805.05411, 2018.

\bibitem[Lan \& Zhou(2016)Lan and Zhou]{Lan2016}
Lan, G. and Zhou, Z.
\newblock Algorithms for stochastic optimization with expectation constraints.
\newblock \emph{arXiv preprint arXiv:1604.03887}, 2016.

\bibitem[Lin et~al.(2018{\natexlab{a}})Lin, Liu, Rafique, and
  Yang]{lin2018solving}
Lin, Q., Liu, M., Rafique, H., and Yang, T.
\newblock Solving weakly-convex-weakly-concave saddle-point problems as
  weakly-monotone variational inequality.
\newblock \emph{arXiv preprint arXiv:1810.10207}, 2018{\natexlab{a}}.

\bibitem[Lin et~al.(2018{\natexlab{b}})Lin, Ma, and
  Yang]{lin2018levelfinitesum}
Lin, Q., Ma, R., and Yang, T.
\newblock Level-set methods for finite-sum constrained convex optimization.
\newblock In \emph{International Conference on Machine Learning}, pp.\
  3118--3127, 2018{\natexlab{b}}.

\bibitem[Lin et~al.(2018{\natexlab{c}})Lin, Nadarajah, and
  Soheili]{lin2018levelsiam}
Lin, Q., Nadarajah, S., and Soheili, N.
\newblock A level-set method for convex optimization with a feasible solution
  path.
\newblock \emph{SIAM Journal on Optimization}, 28\penalty0 (4):\penalty0
  3290--3311, 2018{\natexlab{c}}.

\bibitem[Mahdavi et~al.(2012)Mahdavi, Yang, Jin, and
  Zhu]{mahdavi-2012-stochstic}
Mahdavi, M., Yang, T., Jin, R., and Zhu, S.
\newblock Stochastic gradient descent with only one projection.
\newblock In \emph{Advances in Neural Information Processing Systems (NIPS)},
  pp.\  503--511, 2012.

\bibitem[Nesterov(2013)]{nesterov2013introductory}
Nesterov, Y.
\newblock \emph{Introductory lectures on convex optimization: A basic course},
  volume~87.
\newblock Springer Science \& Business Media, 2013.

\bibitem[Nguyen(2018)]{nguyen2018dc}
Nguyen, L.~T.
\newblock Dc approach to weakly convex optimization and nonconvex quadratic
  optimization problems.
\newblock \emph{Optimization}, 67\penalty0 (2):\penalty0 195--216, 2018.

\bibitem[Nocedal \& Wright(2006)Nocedal and Wright]{nocedal2006numerical}
Nocedal, J. and Wright, S.
\newblock \emph{Numerical optimization}.
\newblock Springer Science \& Business Media, 2006.

\bibitem[Paquette et~al.(2018)Paquette, Lin, Drusvyatskiy, Mairal, and
  Harchaoui]{paquette2018catalyst}
Paquette, C., Lin, H., Drusvyatskiy, D., Mairal, J., and Harchaoui, Z.
\newblock Catalyst for gradient-based nonconvex optimization.
\newblock pp.\  1--10, 2018.

\bibitem[Polyak(1967)]{swtichgradientpolyak}
Polyak, B.
\newblock A general method of solving extremum problems.
\newblock \emph{Soviet Mathematics Doklady}, 8\penalty0 (3):\penalty0 593--597,
  1967.

\bibitem[Rafique et~al.(2018)Rafique, Liu, Lin, and Yang]{rafique2018non}
Rafique, H., Liu, M., Lin, Q., and Yang, T.
\newblock Non-convex min-max optimization: Provable algorithms and applications
  in machine learning.
\newblock \emph{arXiv preprint arXiv:1810.02060}, 2018.

\bibitem[Reddi et~al.(2016{\natexlab{a}})Reddi, Hefny, Sra, P\'{o}cz\'{o}s, and
  Smola]{Reddi:2016:SVR:3045390.3045425}
Reddi, S.~J., Hefny, A., Sra, S., P\'{o}cz\'{o}s, B., and Smola, A.
\newblock Stochastic variance reduction for nonconvex optimization.
\newblock In \emph{Proceedings of the 33rd International Conference on
  International Conference on Machine Learning (ICML)}, pp.\  314--323.
  JMLR.org, 2016{\natexlab{a}}.

\bibitem[Reddi et~al.(2016{\natexlab{b}})Reddi, Sra, P{\'{o}}czos, and
  Smola]{DBLP:conf/cdc/ReddiSPS16}
Reddi, S.~J., Sra, S., P{\'{o}}czos, B., and Smola, A.~J.
\newblock Fast incremental method for smooth nonconvex optimization.
\newblock In \emph{55th {IEEE} Conference on Decision and Control (CDC)}, pp.\
  1971--1977, 2016{\natexlab{b}}.

\bibitem[Rigollet \& Tong(2011)Rigollet and Tong]{rigollet2011neyman}
Rigollet, P. and Tong, X.
\newblock Neyman-pearson classification, convexity and stochastic constraints.
\newblock \emph{Journal of Machine Learning Research}, 12\penalty0
  (Oct):\penalty0 2831--2855, 2011.

\bibitem[Rockafellar(1970)]{rockafellar1970convex}
Rockafellar, R.
\newblock \emph{Convex Analysis}.
\newblock Princeton mathematical series. Princeton University Press, 1970.

\bibitem[Rockafellar(1976)]{citeulike:9472207}
Rockafellar, R.~T.
\newblock Monotone operators and the proximal point algorithm.
\newblock \emph{SIAM J. on Control and Optimization}, 14\penalty0 (5), 1976.
\newblock URL
  \url{http://epubs.siam.org/sicon/resource/1/sjcodc/v14/i5/p877\_s1}.

\bibitem[Ruszczy{\'n}ski(2006)]{ruszczynski2006nonlinear}
Ruszczy{\'n}ski, A.~P.
\newblock \emph{Nonlinear optimization}, volume~13.
\newblock Princeton university press, 2006.

\bibitem[Sahin et~al.(2019)Sahin, Alacaoglu, Latorre, Cevher,
  et~al.]{sahin2019inexact}
Sahin, M.~F., Alacaoglu, A., Latorre, F., Cevher, V., et~al.
\newblock An inexact augmented lagrangian framework for nonconvex optimization
  with nonlinear constraints.
\newblock In \emph{Advances in Neural Information Processing Systems}, pp.\
  13965--13977, 2019.

\bibitem[Sahni(1974)]{sahni1974computationally}
Sahni, S.
\newblock Computationally related problems.
\newblock \emph{SIAM Journal on Computing}, 3\penalty0 (4):\penalty0 262--279,
  1974.

\bibitem[Wei \& Neely(2018)Wei and Neely]{wei2018primal}
Wei, X. and Neely, M.~J.
\newblock Primal-dual frank-wolfe for constrained stochastic programs with
  convex and non-convex objectives.
\newblock \emph{arXiv preprint arXiv:1806.00709}, 2018.

\bibitem[Wei et~al.(2018)Wei, Yu, Ling, and Neely]{wei2018solving}
Wei, X., Yu, H., Ling, Q., and Neely, M.
\newblock Solving non-smooth constrained programs with lower complexity than
  $o(1/\varepsilon)$: A primal-dual homotopy smoothing approach.
\newblock In \emph{Advances in Neural Information Processing Systems}, pp.\
  3995--4005, 2018.

\bibitem[Weston \& Watkins(1998)Weston and Watkins]{weston1998multi}
Weston, J. and Watkins, C.
\newblock Multi-class support vector machines.
\newblock Technical report, Citeseer, 1998.

\bibitem[Xu(2017{\natexlab{a}})]{xu2017first}
Xu, Y.
\newblock First-order methods for constrained convex programming based on
  linearized augmented lagrangian function.
\newblock \emph{arXiv preprint arXiv:1711.08020}, 2017{\natexlab{a}}.

\bibitem[Xu(2017{\natexlab{b}})]{xu2017global}
Xu, Y.
\newblock Global convergence rates of augmented lagrangian methods for
  constrained convex programming.
\newblock \emph{arXiv preprint arXiv:1711.05812}, 2017{\natexlab{b}}.

\bibitem[Xu(2018)]{xu2018primal}
Xu, Y.
\newblock Primal-dual stochastic gradient method for convex programs with many
  functional constraints.
\newblock \emph{arXiv preprint arXiv:1802.02724}, 2018.

\bibitem[Yang et~al.(2016)Yang, Lin, and Li]{yangnonconvexmo}
Yang, T., Lin, Q., and Li, Z.
\newblock Unified convergence analysis of stochastic momentum methods for
  convex and non-convex optimization.
\newblock volume abs/1604.03257, 2016.

\bibitem[Yang et~al.(2017)Yang, Lin, and Zhang]{yang2017richer}
Yang, T., Lin, Q., and Zhang, L.
\newblock A richer theory of convex constrained optimization with reduced
  projections and improved rates.
\newblock In \emph{Proceedings of the 34th International Conference on Machine
  Learning-Volume 70}, pp.\  3901--3910. JMLR. org, 2017.

\bibitem[Yu \& Neely(2017)Yu and Neely]{yu2017simple}
Yu, H. and Neely, M.~J.
\newblock A simple parallel algorithm with an o(1/t) convergence rate for
  general convex programs.
\newblock \emph{SIAM Journal on Optimization}, 27\penalty0 (2):\penalty0
  759--783, 2017.

\bibitem[Yu et~al.(2017)Yu, Neely, and Wei]{yu2017online}
Yu, H., Neely, M., and Wei, X.
\newblock Online convex optimization with stochastic constraints.
\newblock In \emph{Advances in Neural Information Processing Systems}, pp.\
  1428--1438, 2017.

\bibitem[Zafar et~al.(2015)Zafar, Valera, Rodriguez, and
  Gummadi]{zafar2015fairness}
Zafar, M.~B., Valera, I., Rodriguez, M.~G., and Gummadi, K.~P.
\newblock Fairness constraints: Mechanisms for fair classification.
\newblock \emph{arXiv preprint arXiv:1507.05259}, 2015.

\bibitem[Zafar et~al.(2017)Zafar, Valera, Rodriguez, Gummadi, and
  Weller]{zafar2017parity}
Zafar, M.~B., Valera, I., Rodriguez, M., Gummadi, K., and Weller, A.
\newblock From parity to preference-based notions of fairness in
  classification.
\newblock In \emph{Advances in Neural Information Processing Systems}, pp.\
  229--239, 2017.

\bibitem[Zhang et~al.(2013)Zhang, Yang, Jin, and He]{zhang2013logt}
Zhang, L., Yang, T., Jin, R., and He, X.
\newblock O (logt) projections for stochastic optimization of smooth and
  strongly convex functions.
\newblock \emph{arXiv preprint arXiv:1304.0740}, 2013.

\bibitem[Zhang \& He(2018)Zhang and He]{zhang2018convergence}
Zhang, S. and He, N.
\newblock On the convergence rate of stochastic mirror descent for nonsmooth
  nonconvex optimization.
\newblock \emph{arXiv preprint arXiv:1806.04781}, 2018.

\end{thebibliography}

\newpage
\appendix
\onecolumn
\section{Appendix}
\label{sec:appendix}
In this section, we provide the proofs for the theoretical results in the paper.
\subsection{Proof of Lemma~\ref{thm:boundlambda}}
\begin{proof}
	By KKT conditions, it holds that $\lambda_t \geq 0$ and $\lambda_t \left( g(\widehat{\bx}_t) + \frac{\hat\rho}{2}\|\widehat{\bx}_t -\bx_t\|^2 \right) = 0$. If $\lambda_t = 0$, there is nothing to show. So, we focus on the case that $\lambda_t > 0$ and $g(\widehat{\bx}_t) + \frac{\hat\rho}{2}\|\widehat{\bx}_t -\bx_t\|^2 = 0$.  Note that $\bx_0$ is an $\epsilon^2$-feasible solution. Using the definitions of $\mathcal{A}(\bx_t,\hat\rho,\hat\epsilon,\delta/T)$ and $\hat\epsilon$ and the union bound, we can show that the iterate $\bx_t$ generated by Algorithm~\ref{alg:iqrc} is an $\epsilon^2$-feasible solution for any $t$ with a probability of at least $1-\delta$.
	
	Let $\tilde\bx_t \equiv \argmin\limits_{\bx \in \X} \lbrace g(\bx) + \frac{\hat{\rho}}{2} \| \bx - \bx_t \|^2 \rbrace $. According to Assumption~\ref{assume:stochastic}B, the fact that $\bx_t$ is $\epsilon^2$-feasible, and the fact that $\hat\rho\leq \rho+\rho_\epsilon$, we have 
	\begin{eqnarray}
		\label{eq:slator1}
		-\sigma_\epsilon \geq \min\limits_{\bx \in \X}g(\bx) + \frac{\rho+\rho_\epsilon}{2}\| \bx - \bx_t \|^2
		\geq\min\limits_{\bx \in \X}g(\bx) + \frac{\hat{\rho}}{2}\| \bx - \bx_t \|^2
		= g(\tilde\bx_t) + \frac{\hat{\rho}}{2}\| \tilde\bx_t  - \bx_t \|^2.
	\end{eqnarray}
	As a result, the Lagrangian multiplier $\lambda_t$  is well-defined and satisfies the optimality condition below together with $\widehat\bx_t$:
	\begin{align}
		\label{eq:optcond1}
		\mathbf{0}\in\partial f(\widehat\bx_t) + \hat{\rho} (\widehat\bx_t -\bx_t ) + \lambda_t (\partial g(\widehat\bx_t) + \hat{\rho} (\widehat\bx_t -\bx_t )) +\widehat\bzt_t,
	\end{align}
	for some $\widehat\bzt_t\in \mathcal{N}_\X(\widehat\bx_t)$.

	Since $g(\bx) + \frac{\hat{\rho}}{2}\| \bx - \bx_t \|^2+\mathbf{1}_{\X}(\bx) $ is $(\hat{\rho} - \rho)$-strongly convex in $\bx$ and $\frac{\widehat\bzt_t}{\lambda_t}\in \mathcal{N}_\X(\widehat\bx_t)=\partial\mathbf{1}_{\X}(\widehat\bx_t)$,  we have
	\small
	\begin{align*}
		g(\tilde\bx_t) + \frac{\hat{\rho}}{2}\| \tilde\bx_t  - \bx_t \|^2 &\geq g(\widehat\bx_t) + \frac{\hat{\rho}}{2}\| \widehat\bx_t -\bx_t \|^2 + \langle \partial g(\widehat\bx_t) + \hat{\rho} (\widehat\bx_t -\bx_t )+\frac{\widehat\bzt_t}{\lambda_t}, \tilde\bx_t - \widehat\bx_t \rangle + \frac{\hat{\rho} - \rho}{2} \| \tilde\bx_t - \widehat\bx_t \|^2 \\
		&= \langle \partial g(\widehat\bx_t) + \hat{\rho} (\widehat\bx_t -\bx_t )+\widehat\bzt_t/\lambda_t, \tilde\bx_t - \widehat\bx_t \rangle + \frac{\hat{\rho} - \rho}{2} \| \tilde\bx_t - \widehat\bx_t \|^2.
	\end{align*}
	\normalsize
	Applying \eqref{eq:slator1} to the inequality above and arranging terms give
	\begin{align*}
		-\sigma_{\epsilon} - \frac{(\hat{\rho} - \rho)\| \tilde\bx_t - \widehat\bx_t \|^2}{2}  &\geq \langle \partial g(\widehat\bx_t) + \hat{\rho} (\widehat\bx_t -\bx_t )+\widehat\bzt_t/\lambda_t, \tilde\bx_t - \widehat\bx_t \rangle \\
		&\geq - \frac{ \| \partial g(\widehat\bx_t) + \hat{\rho} (\widehat\bx_t -\bx_t )+\widehat\bzt_t/\lambda_t \|^2 }{2(\hat{\rho} - \rho)} - \frac{(\hat{\rho} - \rho)\|\tilde\bx_t - \widehat\bx_t \|^2}{2},
	\end{align*}
	which implies $\| \partial g(\widehat\bx_t) + \hat{\rho} (\widehat\bx_t -\bx_t )+\widehat\bzt_t/\lambda_t \|^2 \geq 2 \sigma_{\epsilon} (\hat{\rho} - \rho)$. 
	
	Using this lower bound on $\| \partial g(\widehat\bx_t) + \hat{\rho} (\widehat\bx_t -\bx_t )+\widehat\bzt_t/\lambda_t \|^2$ and \eqref{eq:optcond1}, we have that
	\small
	\begin{align*}
		\lambda_t = \frac{\| \partial f(\widehat\bx_t) + \hat{\rho} (\widehat\bx_t -\bx_t ) \| }{\| \partial g(\widehat\bx_t) + \hat{\rho} (\widehat\bx_t -\bx_t ) +\widehat\bzt_t/\lambda_t\| } 
		\leq  \frac{M+\hat\rho D }{\sqrt{2 \sigma_{\epsilon} (\hat{\rho} - \rho)}}
	\end{align*}
	\normalsize
	for all $t$ with a probability of at least $1-\delta$, where we have used Assumption~\ref{assume:stochastic}C and Assumption~\ref{assume:stochastic}F in the inequality.
\end{proof}

\subsection{Proof of Theorem~\ref{thm:main}}

\begin{proof}
	Since $\bx_{t+1}=\mathcal{A}(\bx_t,\hat\rho,\hat\epsilon,\delta/T)$, the definition of $\mathcal{A}$ and the union bound imply that the following inequalities hold for $t=0,\dots,T-1$ with a probability of at least $1-\delta$. 
	\small
	\begin{align}
		f(\bx_{t+1}) + \frac{\hat{\rho}}{2}\| \bx_{t+1} - \bx_t \|^2 - f(\widehat\bx_t) - \frac{\hat{\rho}}{2}\| \widehat\bx_t - \bx_t \|^2 \leq \hat\epsilon^2,\quad
		g(\bx_{t+1}) + \frac{\hat{\rho}}{2}\|\bx_{t+1} - \bx_t \|^2 \leq \hat\epsilon^2. \label{PGSMeq1}
	\end{align}
	\normalsize
	Let $\lambda_t$ be the optimal Lagrangian multiplier corresponding to $\widehat\bx_t$. Then $\widehat\bx_t$ is also the optimal solution of the Lagrangian function 
	$\mathcal{L}(\bx)\equiv f(\bx) + \frac{\hat{\rho}}{2}\|\bx - \bx_t \|^2 + \lambda_t(g(\bx) + \frac{\hat{\rho}}{2}\| \bx- \bx_t\|^2)$. Since $\mathcal{L}(\bx)$ is $(1+\lambda_t)(\hat\rho-\rho)$-strongly convex, we have
	\small
	\begin{eqnarray}
		\nonumber
		\frac{(1+\lambda_t)(\hat{\rho}-\rho)}{2}\|\bx_t - \widehat\bx_t\|^2 &\leq& f(\bx_t) + \frac{\hat{\rho}}{2}\|\bx_t - \bx_t \|^2 + \lambda_t(g(\bx_t) + \frac{\hat{\rho}}{2}\| \bx_t - \bx_t\|^2) \\
		&&- \left[f(\widehat\bx_t) + \frac{\hat{\rho}}{2}\|\widehat\bx_t - \bx_t \|^2 + \lambda_t(g(\widehat\bx_t) + \frac{\hat{\rho}}{2}\| \widehat\bx_t - \bx_t\|^2)\right] \nonumber \\
		&=& f(\bx_t) - f(\widehat\bx_t) + \lambda_t g(\bx_t) - \frac{\hat{\rho}}{2}\| \widehat\bx_t - \bx_t\|^2, \label{PGSMeq2}
	\end{eqnarray}
	\normalsize
	where we use the complementary slackness, i.e., $\lambda_t(g(\widehat\bx_t) + \frac{\hat{\rho}}{2}\| \widehat\bx_t - \bx_t\|^2)=0$ in the equality above.
	Organizing the terms in the first inequality of (\ref{PGSMeq1}), we get
	\begin{align}
		f(\bx_{t+1}) &\leq f(\widehat\bx_t) + \hat\epsilon^2 + \frac{\hat{\rho}}{2}\| \widehat\bx_t - \bx_t\|^2 - \frac{\hat{\rho}}{2}\| \bx_{t+1} - \bx_t \|^2 \nonumber \\
		&\leq f(\widehat\bx_t) + \hat\epsilon^2 + f(\bx_t) - f(\widehat\bx_t) + \lambda_t g(\bx_t) - \frac{(1+\lambda_t)(\hat{\rho}-\rho)}{2}\|\bx_t - \widehat\bx_t \|^2 \nonumber \\
		&= f(\bx_t) + \lambda_t g(\bx_t) - \frac{(1+\lambda_t)(\hat{\rho}-\rho)}{2}\|\bx_t - \widehat\bx_t \|^2 + \hat\epsilon^2 \nonumber 
	\end{align}
	where second inequality is because of (\ref{PGSMeq2}). The inequality  above can be written as
	\begin{align}
		\frac{(1+\lambda_t)(\hat{\rho}-\rho)}{2}\|\bx_t - \widehat\bx_t \|^2 \leq f(\bx_t) - f(\bx_{t+1}) + \lambda_t g(\bx_t) + \hat\epsilon^2 \label{PGSMeq3}
	\end{align}
	Summing up inequality (\ref{PGSMeq3}) from $t=0,1,\dots,T-1$, we have
	\begin{align*}
		\sum_{t = 0}^{T-1} \frac{(1+\lambda_t)(\hat{\rho}-\rho)}{2}\| \bx_t - \widehat\bx_t \|^2 \leq f(\bx_0) - f_{\text{lb}} + \sum_{t=0}^{T-1} \lambda_t g(\bx_t) + T \hat\epsilon^2,
	\end{align*}
	where $f_{\text{lb}}$ is introduced in Assumption~\ref{assume:stochastic}D.
	Note that $g(\bx_t) \leq g(\bx_t)+\frac{\hat{\rho}}{2}\|\bx_t - \bx_{t-1}\|^2 \leq \hat\epsilon^2 $ because of the property of $\mathcal{A}$. So we have
	\begin{align}
		\sum_{t = 0}^{T-1}\frac{(\hat{\rho}-\rho)}{2}\| \bx_t - \widehat\bx_t \|^2 \leq\sum_{t = 0}^{T-1}\frac{(1+\lambda_t)(\hat{\rho}-\rho)}{2}\| \bx_t - \widehat\bx_t \|^2 \leq f(\bx_0) - f_{\text{lb}} + \sum_{t=0}^{T-1} \lambda_t \hat\epsilon^2 +T\hat\epsilon^2. \nonumber
	\end{align}
	Dividing both sides by $T(\hat{\rho}-\rho)/2$, we have
	\small
	\begin{align*}
		\mathbb{E}_R \| \bx_R - \widehat\bx_R \|^2=\frac{1}{T}\sum_{t = 0}^{T-1} \| \bx_t - \widehat\bx_t \|^2 
		&\leq \frac{2(f(\bx_0) - f_{\text{lb}})}{T(\hat{\rho}-\rho)} + \frac{2}{T(\hat{\rho}-\rho)}\sum_{t=0}^{T-1} (1+\lambda_t) \hat\epsilon^2 \nonumber \\
		&\leq \frac{2(f(\bx_0) - f_{\text{lb}})}{T(\hat{\rho}-\rho)}+\frac{2\hat\epsilon^2}{(\hat{\rho}-\rho)}\left(\frac{M+\hat\rho D }{\sqrt{2 \sigma_{\epsilon} (\hat{\rho} - \rho)}}+1\right)\\
		&\leq \frac{\epsilon^2}{2}+\frac{\epsilon^2}{2}=\epsilon^2
	\end{align*}
	\normalsize
	with a probability of at least $1-\delta$, 	where the second inequality is by Lemma~\ref{thm:boundlambda} and the last inequality follows the definitions of $T$ and $\hat\epsilon$.
\end{proof}

\subsection{Proof of Theorem~\ref{thm:mdconverge}}
\begin{proof}
	For simplicity of notation, we defined $\mu:=\hat\rho-\rho$.
	Let $J:=\{0,1,\dots,K-1\}\backslash I$ where $I$ is generated in  Algorithm~\ref{alg:MD} when it terminates.
	
	Suppose $k\in I$, namely, $G(\bz_k)\leq \hat\epsilon^2$ is satisfied in iteration $k$.
	Algorithm~\ref{alg:MD} will update $\bz_{k+1}$ using $F'(\bz_k)$.
	Following the standard analysis of subgradient decent method, we can get  
	\begin{align}
		F(\bz_k) - F(\widehat\bx_t) &\leq \gamma_k (M^2+\hat{\rho}^2D^2) + (\frac{1}{2\gamma_k} - \frac{\mu}{2})\|\bz_k - \widehat\bx_t\|^2 - \frac{\|\bz_{k+1} - \widehat\bx_t \|^2}{2\gamma_k} \nonumber \\
		&= \frac{2(M^2+\hat{\rho}^2D^2)}{\mu(k+2)} + (\frac{\mu(k+2)}{4}-\frac{2\mu}{4}) \| \bz_k - \widehat\bx_t\|^2 - \frac{\mu(k+2)}{4}\| \bz_{k+1} - \widehat\bx_t\|^2 \nonumber\\
		&= \frac{2(M^2+\hat{\rho}^2D^2)}{\mu(k+2)} + \frac{\mu k}{4}\|\bz_k - \widehat\bx_t \|^2 - \frac{\mu(k+2)}{4}\| \bz_{k+1} - \widehat\bx_t\|^2 \label{MDeq1}
	\end{align}
	Multiplying $k+1$ to the both sides of \ref{MDeq1}, we can get 
	\small
	\begin{align}
		(k+1)(F(\bz_k) - F(\widehat\bx_t)) &\leq \frac{2(M^2+\hat{\rho}^2D^2)(k+1)}{\mu(k+2)} + \frac{\mu k (k+1)}{4}\|\bz_k - \widehat\bx_t\|^2 - \frac{\mu (k+1)(k+2)}{4}\|\bz_{k+1} - \widehat\bx_t\|^2 \nonumber \\
		&\leq \frac{2(M^2+\hat{\rho}^2D^2)}{\mu} + \frac{\mu k (k+1)}{4}\|\bz_k - \widehat\bx_t\|^2 - \frac{\mu (k+1)(k+2)}{4}\|\bz_{k+1} - \widehat\bx_t\|^2 \label{MDFside}
	\end{align}
	\normalsize
	
	Suppose $k\in J$, namely, $G(\bz_k)\leq \hat\epsilon^2$ is not satisfied in iteration $k$. Algorithm~\ref{alg:MD} will update $\bz_{k+1}$ using $G'(\bz_k)$.
	Similarly, we can get 
	\small
	\begin{align}
		(k+1)(G(\bz_k) - G(\widehat\bx_t)) \leq \frac{2(M^2+\hat{\rho}^2D^2)}{\mu} + \frac{\mu k (k+1)}{4}\|\bz_k - \widehat\bx_t\|^2 - \frac{\mu (k+1)(k+2)}{4}\|\bz_{k+1} - \widehat\bx_t\|^2 \label{MDGside}
	\end{align}
	\normalsize
	
	Summing up inequalities (\ref{MDFside}) and (\ref{MDGside}) from $k=0,\dots,K-1$ and dropping the non-negative terms, we obtain
	\small
	\begin{align}
		\sum\limits_{k \in I} (k+1)(F(\bz_k) - F(\widehat\bx_t)) + \sum\limits_{k \in J} (k+1)(G(\bz_k) - G(\widehat\bx_t)) &\leq \frac{2K(M^2+\hat{\rho}^2D^2)}{\mu}
	\end{align}
	\normalsize
	Because $G(\bz_k)>\hat\epsilon^2$ when $k\in J $ and   $G(\widehat\bx_t)\leq 0$, the inequality above implies
	\small
	\begin{align}
		\sum\limits_{k \in I} (k+1)(F(\bz_k) - F(\widehat\bx_t)) + \sum\limits_{k \in J} (k+1)\hat\epsilon^2 &\leq \frac{2K(M^2+\hat{\rho}^2D^2)}{\mu}
	\end{align}
	\normalsize
	Rearranging terms gives
	\small
	\begin{align}
		\sum_{k \in I}(k+1)(F(\bz_k) - F(\widehat\bx_t)) &\leq \sum_{k \in I}(k+1)\hat\epsilon^2 - \sum_{k=0}^{K-1} (k+1)\hat\epsilon^2 + \frac{2K(M^2+\hat{\rho}^2D^2)}{\mu}\nonumber \\
		&\leq \sum_{k \in I}(k+1)\hat\epsilon^2 - \frac{K(K+1)}{2}\hat\epsilon^2 + \frac{2K(M^2+\hat{\rho}^2D^2)}{\mu}. \nonumber
	\end{align}
	\normalsize
	Given that $K \geq \frac{4(M^2+\hat{\rho}^2D^2)}{\mu \hat\epsilon^2}$, the summation of the last two terms in the inequality above is non-positive. As a result, we have
	\begin{align}
		\sum_{k \in I}(k+1)(F(\bz_k) - F(\widehat\bx_t)) \leq \sum_{k \in I}(k+1)\hat\epsilon^2 \nonumber
	\end{align}
	Dividing both sides by $\sum_{k \in I} (k+1)$ and using the convexity of $F$, we obtain $F(\bx_{t+1}) - F(\widehat\bx_t) \leq \hat\epsilon^2$. As the same time, the convexity of $G$ ensures $G(\bx_{t+1}) \leq \frac{\sum_{k \in I}(k+1)G(\bz_k)}{\sum_{k \in I}(k+1)} \leq \hat\epsilon^2$.
	
	Hence, Algorithm~\ref{alg:MD} can be used as an oracle to solve \eqref{eq:phit} and the complexity of Algorithm~\ref{alg:iqrc} will be 
	$$
	TK=O\left(\frac{(f(\bx_0) - f_{\text{lb}})(M^2+\hat{\rho}^2D^2)}{\epsilon^4(\hat\rho-\rho)^3} \Big(\frac{M+\hat\rho D }{\sqrt{ \sigma_{\epsilon} (\hat{\rho} - \rho)}}+1\Big)\right).
	$$
	Note that,  Algorithm~\ref{alg:MD} is deterministic so that the complexity above does not depend on $\delta$.
\end{proof}

\subsection{Proof of Theorem~\ref{thm:Yu}}
\begin{proof}
	According to Assumption~\ref{assume:stochastic}B and the factor that $\bx_t$ is $\epsilon^2$-feasible with a high probability, Assumption 2 (The Slater's condition) in \cite{yu2017online} holds for the subproblem~\eqref{eq:phit} with a high probability. According to Theorem 4 in \cite{yu2017online}, Algorithm~\ref{alg:Yu} guarantees 
	\begin{eqnarray}
		\label{eq:YuObjective}
		F(\bx_{t+1})-F(\widehat\bx_t)\leq \mathcal{B}_1(D,\tilde M_0,\tilde M_1,m,\sigma_\epsilon,K,\delta)
	\end{eqnarray}
	with a probability of at least $1-\delta$, where
	\small
	\begin{eqnarray}
		\label{eq:B1}
		&&\mathcal{B}_1(D,\tilde M_0,\tilde M_1,m,\sigma_\epsilon,K,\delta)\\\nonumber
		&\equiv&\frac{D^2+\tilde M_1^2/4+(\tilde M_0+\sqrt{m}\tilde M_1D)^2/2+\log^{0.5}\big(\frac{1}{\delta}\big)\tilde M_0\Lambda(D,\tilde M_0,\tilde M_1,m,\sigma_\epsilon,K,\delta)}{\sqrt{K}},
	\end{eqnarray}
	\small
	\small
	\begin{eqnarray}
		\label{eq:Lambda1}
		\Lambda(D,\tilde M_0,\tilde M_1,m,\sigma_\epsilon,K,\delta)&\equiv&\frac{\sigma_\epsilon }{2}+(\tilde M_0+\sqrt{m}\tilde M_1D)
		+\frac{2D^2 }{\sigma_\epsilon}
		+\frac{2\tilde M_1 D+(\tilde M_0+\sqrt{m}\tilde M_1D)^2}{\sigma_\epsilon}\\\nonumber
		&&+\tilde\Lambda(D,\tilde M_0,\tilde M_1,m,\sigma_\epsilon,K,\delta)
		+\frac{8(\tilde M_0+\sqrt{m}\tilde M_1D)^2}{\sigma_\epsilon}\log\big(\frac{2K}{\delta}\big)
		=O(\log(K/\delta)),
	\end{eqnarray}
	\normalsize
	and
	\small
	\begin{eqnarray*}
		\tilde\Lambda(D,\tilde M_0,\tilde M_1,m,\sigma_\epsilon,K,\delta)&\equiv&\frac{8(\tilde M_0+\sqrt{m}\tilde M_1D)^2}{\sigma_\epsilon}\log\left[1+\frac{32(\tilde M_0+\sqrt{m}\tilde M_1D)^2}{\sigma_\epsilon^2}
		\exp\left(\frac{\sigma_\epsilon}{8(\tilde M_0+\sqrt{m}\tilde M_1D)}\right)\right].
	\end{eqnarray*}
	\normalsize
	
	According to equation (22) in \cite{yu2017online}, Algorithm~\ref{alg:Yu} guarantees 
	\small
	\begin{eqnarray}
		\label{eq:YuConstraint}
		F_i(\bx_{t+1})&\leq& \frac{\|(Q_K^1,Q_K^2,\dots,Q_K^m)\|}{K}+\frac{\tilde M_1^2}{\sqrt{K}}+\frac{\sqrt{m}\tilde M_1^2}{2K^2}\sum_{k=0}^{K-1}\|(Q_k^1,Q_k^2,\dots,Q_k^m)\|
	\end{eqnarray}
	\normalsize
	for $i=1,\dots,m$.
	It is also shown in  Theorem 3 in \cite{yu2017online} that 
	\small
	\begin{eqnarray}
		\label{eq:YuQbound}
		\|(Q_k^1,Q_k^2,\dots,Q_k^m)\|&\leq& \sqrt{K}\Lambda(D,\tilde M_0,\tilde M_1,m,\sigma_\epsilon,K,\delta)
	\end{eqnarray}
	\normalsize
	for $k=0,1,\dots,K$ with a probability of at least $1-\delta$.
	Applying \eqref{eq:YuQbound} to \eqref{eq:YuConstraint} and organizing terms, we obtain
	\small
	\begin{eqnarray}
		\label{eq:YuConstraintnew}
		F_i(\bx_{t+1})&\leq& \mathcal{B}_2(D,\tilde M_0,\tilde M_1,m,\sigma_\epsilon,K,\delta)
	\end{eqnarray}
	\normalsize
	with a probability of at least $1-\delta$, where 
	\small
	\begin{eqnarray}
		\label{eq:B2}
		&&\mathcal{B}_2(D,\tilde M_0,\tilde M_1,m,\sigma_\epsilon,K,\delta)\\\nonumber
		&\equiv&\frac{\Lambda(D,\tilde M_0,\tilde M_1,m,\sigma_\epsilon,K,\delta)+\tilde M_1^2+\Lambda(D,\tilde M_0,\tilde M_1,m,\sigma_\epsilon,K,\delta)\sqrt{m}\tilde M_1^2/2}{\sqrt{K}}
	\end{eqnarray}
	\normalsize
	
	To ensure Algorithm~\ref{alg:Yu} is an oracle for \eqref{eq:phit}, it suffices to choose the $K$ large enough so that the left hand sides of \eqref{eq:YuObjective} and \eqref{eq:YuConstraintnew} are both no more than $\hat\epsilon^2$. Because $\Lambda(D,\tilde M_0,\tilde M_1,m,\sigma_\epsilon,K,\delta)=O(\log(K/\delta))$. It suffices to choose $K=\tilde O(\frac{1}{\hat\epsilon^4}\log(\frac{1}{\delta}))$. Hence, Algorithm~\ref{alg:Yu} can be used as an oracle to solve \eqref{eq:phit} and the complexity of Algorithm~\ref{alg:iqrc} will be 
	$$
	TK=\tilde O\left(\frac{1}{\epsilon^6}\right).
	$$
	
\end{proof}

%
%
%

\end{document}